\newtheorem{theorem}{Theorem}[section]
\newtheorem{corollary}[theorem]{Corollary} 
\newtheorem{lemma}[theorem]{Lemma}
\newtheorem{proposition}[theorem]{Proposition} 
\theoremstyle{definition}
\newtheorem{definition}[theorem]{Definition}
\theoremstyle{example}
\newtheorem{example}[theorem]{Example}
\theoremstyle{remark}
\numberwithin{equation}{section}
\newcommand{\hs}{\mathcal{H}}
\def\C{{\mathbb C}}
\def\R{{\mathbb R}}
\def\Z{{\mathbb Z}}
\def\N{{\mathbb N}}
\def\T{{\mathbb T}}
\def\wmu{\widehat{\mu}}
\begin{document}
\allowdisplaybreaks[4]
  
\title[Scalar spectral measures]{Scalar spectral measures associated with an Operator-Fractal}
\author[P.E.T. Jorgensen]{Palle E. T. Jorgensen}
\address[Palle E.T. Jorgensen]{Department of Mathematics, The University of Iowa, Iowa
City, IA 52242-1419, U.S.A.}
\email{jorgen@math.uiowa.edu}
\urladdr{http://www.math.uiowa.edu/\symbol{126}jorgen/}
\author[K.A. Kornelson]{Keri A. Kornelson}
\address[Keri Kornelson]{Department of Mathematics, The University of Oklahoma, Norman, OK, 73019-0315, U.S.A.}
\email{kkornelson@math.ou.edu}
\urladdr{http://www.math.ou.edu/\symbol{126}kkornelson/}
\author[K.L. Shuman]{Karen L. Shuman}
\address[Karen Shuman]{Department of Mathematics and Statistics,
Grinnell College, Grinnell, IA 50112-1690, U.S.A.}
\email{shumank@math.grinnell.edu}
\urladdr{http://www.math.grinnell.edu/\symbol{126}shumank/}
\thanks{The second and third authors were supported in part by NSF grant DMS-0701164.  The third author was supported in part by the Grinnell College Committee for the Support of Faculty Scholarship.}

\let\thefootnote\relax\footnotetext{\textit{2010 Mathematics Subject Classification.  }Primary 46L45, 47B25 , 47B15, 47B32, 47B40, 43A70, 28A80, 34K08. Secondary 35P25, 58J50.}

\keywords{Operators in Hilbert space, adjoints, selfsimilarity, fractal, Bernoulli convolution, singular measures, Radon-Nikodym derivatives, algebras of bounded operators, spectrum, spectral pairs, boundary measures, Fourier analysis.}

\begin{abstract} 
We examine the operator $U_5$ defined on $L^2(\mu_{\frac14})$ where $\mu_{\frac14}$ is the $\frac14$ Cantor measure.   The operator $U_5$ scales the elements of the canonical exponential spectrum for $L^2(\mu_{\frac14})$ by $5$ --- that is, $Ue_{\gamma} = e_{5\gamma}$ where $e_{\gamma}(t) = e^{2\pi i \gamma t}$.  It is known that $U_5$ has a self-similar structure, which makes its spectrum, which is currently unknown, of particular interest.  In order to better understand the spectrum of $U_5$, we demonstrate a decomposition of the projection valued measures and scalar spectral measures associated with $U_5$.  We are also able to compute associated Radon-Nikodym derivatives between the scalar measures.  Our decomposition utilizes a system of operators which form a representation of the Cuntz algebra $\mathcal{O}_2$.
\end{abstract}

\date{\today}
\maketitle

\tableofcontents

\section{Introduction}\label{Sec:Intro}

\subsection{Background and setting}
Over a decade ago, it was discovered that there exists a family of Borel measures $\{\mu\}$ with compact support on $\R$ such that each $\mu$ has fractal dimension and $L^2(\mu)$ possesses a Fourier basis $\{e^{2\pi i \gamma t} : \gamma \in \Gamma_{\mu}\}$ \cite{JoPe98}.  In this setting, a Fourier basis for $L^2(\mu)$ is determined by a countable set $\Gamma_{\mu}$, which is called the \textbf{spectrum} of $\mu$.  If $\mu$ has such a Fourier basis, then $\mu$ is called a \textbf{spectral measure}.  Saying that $L^2(\mu)$ possesses a Fourier basis is the same as saying $L^2(\mu)$ admits orthogonal Fourier series.

Not all measures are spectral measures.  For example, the middle-thirds Cantor measure $\mu_{\frac13}$ does not have a spectrum---in fact, there cannot be more than two orthogonal complex exponentials in $L^2(\mu)$. On the other hand, many Cantor measures are spectral \cite{JoPe98}.  If $\mu$ is determined by scaling by $\frac{1}{4}$ at each Cantor iteration step, then the corresponding space $L^2(\mu_{\frac14})$ does have a Fourier basis.   We focus on $L^2(\mu_{\frac14})$ in this paper.

Typically, a spectrum for $\mu$ is a relatively ``thin'' subset of $\Z$ or $\frac{1}{2}\Z$ which has its own scaling properties related to the scaling invariance of $\mu$.  Sometimes a spectrum displays invariance with respect to two different scales, and in these cases, many questions arise.   A particularly interesting example is the Jorgensen-Pedersen spectrum $\Gamma(\frac{1}{4})$, which has self-similarity when scaled by $4$; in addition, the set $5\Gamma(\frac{1}{4})$ is also a spectrum for $\mu_{\frac14}$.  In this paper, we continue to study the two scaling operations, scaling by $4$ and scaling by $5$, whose intertwining properties were first discovered in \cite{DJ09} and later considered in \cite{JKS11} and \cite{JKS12}. 

Despite a number of investigations into the spectral properties of affine measures, there are still many open questions.  For example, even for the simplest case of $\mu_{\frac{1}{4}}$, scaling by $5$ is not well-understood.  Scaling by $4$ induces a $\mu_{\frac{1}{4}}$-measure preserving transformation.  In \cite{JKS12}, we showed that  scaling by $5$ does \textbf{not} preserve $\mu_{\frac{1}{4}}$. Nonetheless, by the result in \cite{DJ09}, scaling $\Gamma(\frac{1}{4})$ by $5$ induces a unitary operator $U$ in  $L^2(\mu_{\frac{1}{4}})$, so the two scalings, one by $4$ and the other by $5$, are compatible at the level of operators. 

Our paper is devoted to studying the $5$-scaling operator $U$.  Its spectral theory is surprisingly subtle.   While $U$ is induced by scaling a spectrum for $\mu_{\frac{1}{4}}$ by $5$, $U$ is not the lifting of a $\mu_{\frac{1}{4}}$-measure preserving endomorphism.  But the operator $U$ has a ``fractal'' nature of its own---it is the  countable infinite direct sum of the operator $MU$ plus a rank-one projection.  Here,  $M$ is multiplication by $z$ in a Fourier representation of $L^2(\mu_{\frac{1}{4}})$.

We aim to study the spectral theory of $U$ with regard to the geometry and ergodic theory of the initial spectral pair $(\mu_{\frac14}, \Gamma(\frac14))$ (Equation \eqref{Eqn:Gamma}) with the use of a natural representation of the Cuntz algebra $\mathcal{O}_2$ acting on the Hilbert space $L^2(\mu_{\frac{1}{4}})$;  in our analysis we make further use of reduction to cyclic subspaces in  $L^2(\mu_{\frac{1}{4}})$.

In a main result for $U$ \cite[Theorem 4.10]{JKS11}, we proved that $U$ is an orthogonal sum of a one-dimensional projection and an infinite number of copies of the operator $MU$.  In other words, $U$ is an ``operator fractal''; i.e., it is a geometric representation of an infinite number of scaled versions of itself. By ``orthogonal sum'' we mean that $L^2( \mu_{\frac14})$ is an orthogonal sum of closed invariant subspaces for $U$.

Now, by the spectral theorem applied to $U$, we obtain an associated projection valued measure $P^U$ supported on the Borel subsets of the circle group $\T$. Hence, from $P^U$ we get induced scalar spectral measures, each one induced by cyclic subspaces for $U$, and hence indexed by vectors  $v \in L^2( \mu_{\frac14})$. For each $v$, we get an associated scalar measure  $m^U_v$ (see Equation \eqref{Eqn:m^{X}_v}), and the spectral data for $U$ is carried by this family of measures.

Below, in Proposition \ref{Prop:RealDecomp} and Theorem \ref{Thm:Iterate} we write out formulas for each of the scalar measures  $m^U_v$  which turn the notion of ``operator fractal'' into a more precise spectral theoretic theorem.   Proposition \ref{Prop:RealDecomp} reflects the $\mathcal{O}_2$ splitting of $L^2(\mu_{\frac14})$ and Theorem \ref{Thm:Iterate} reflects the fractal nature of $U$, in Equation \eqref{Eqn:U_Decomp} and the subsequent matrix decomposition.   Specifically, we prove that, for every $v$,  $m^U_v$  is a convex sum of scalar times the Dirac mass at $1$ and an infinite number of copies measures computed from the operator $MU$.

The paper is organized as follows:  our main decomposition theorems are in Sections \ref{Sec:Decomp} and \ref{Sec:Cyclic}---Theorem \ref{Thm:Iterate}, Theorem \ref{Thm:NoProj}, and Corollary \ref{Cor:Convex}. But in the remaining of this section and the next, we prepare the ground with the statement and proof of some key lemmas to be used.  We hope that these preliminary results may also be of independent interest.  In particular, our new measures in Section \ref{Sec:Induced} play a role in our main Theorem \ref{Thm:Iterate}.  Our preliminary results deal with representation of certain algebras on generators and relations (Theorem \ref{Thm:GenRel}), as well as some results from operator theory and from the theory of measures on boundary spaces.  Inside algebras on generators and relations we identify a particular representation (Lemmas \ref{Lemma:Extend} and \ref{Lemma:Adjoint}) of the Cuntz algebra $\mathcal{O}_2$, i.e., the Cuntz algebra with two generators. This representation will play a key role in the rest of the paper.

Since \cite{JoPe98}, there has been a substantial literature devoted to the study of ergodic scaling properties of spectral affine measures.  A small sampling related directly to this paper includes \cite{DJ09},\cite{DuJo09a}, \cite{DuJo09c}, \cite{DHS09}, \cite{JKS11b}, and \cite{JKS12}.  For a wider look at the spectral theory for affine dynamical systems over the last fifteen years, we point to the papers \cite{JoPe98}, \cite{Gab00}, \cite{PeWa01}, \cite{LaWa02}, \cite{Pe04a}, \cite{Pe04b}, \cite{FeWa05}, \cite{LaWa06}, \cite{Str06}, \cite{HL08}, \cite{DHS09}, \cite{Li09}, \cite{BoKe10}, \cite{DHSW11}, and \cite{Li11}.

\subsection{Generators and relations:  An $L^2(\mu_{\frac{1}{4}})$ representation}\label{Sec:GenRel}

Axiomatic settings for the results in this paper can be framed in several different ways.  Here, we consider $L^2(\mu_{\frac{1}{4}})$ and our operators in the following context:

\begin{equation}\label{Ax:1}
\textrm{Hilbert space, operators:  }\begin{cases} 
& \hs \textrm{ a Hilbert space }\\
& \{S_i\}_{i = 0}^1 \in \textrm{Rep}(\mathcal{O}_2, \hs)\\
& U \textrm{ a normal operator on }\hs\\
& M \textrm{ a unitary operator on }\hs.
\end{cases}
\end{equation}
and
\begin{equation}\label{Ax:2}
\textrm{Operator relations:  }
\begin{cases} 
& S_1^* U S_1 = MU\\
& S_0 \textrm{ commutes with } U.\\
\end{cases}
\end{equation}
The two operators $\{S_0, S_1\}$ which form a representation of $\mathcal{O}_2$ on $\hs$ satisfy $S_i^*S_j = \delta_{i,j}I$, $i, j \in\{0, 1\}$ and $\sum_{i=0}^1 S_iS_i^* = I$.

\begin{theorem}\label{Thm:GenRel}
The relations \eqref{Ax:1} and \eqref{Ax:2} have an irreducible representation on $L^2(\mu_{\frac14})$.
\end{theorem}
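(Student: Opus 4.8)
The plan is to realize all four ingredients of \eqref{Ax:1} concretely on $\hs = L^2(\mu_{\frac14})$ using the Jorgensen–Pedersen spectrum $\Gamma(\frac14)$ and the Fourier orthonormal basis $\{e_\gamma : \gamma \in \Gamma(\frac14)\}$. First I would transport the problem to the sequence space: the map $e_\gamma \mapsto \delta_\gamma$ gives a unitary $L^2(\mu_{\frac14}) \cong \ell^2(\Gamma(\frac14))$, and under the known base-$4$ digit expansion of $\Gamma(\frac14)$ (each $\gamma$ has a finite expansion $\gamma = \sum_{k=0}^{n} a_k 4^k$ with digits $a_k \in \{0,1\}$), we get a further unitary $L^2(\mu_{\frac14}) \cong \bigoplus \C$ indexed by finite binary words. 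The operator $U$ is the unitary with $Ue_\gamma = e_{5\gamma}$, which exists and is unitary by \cite{DJ09} since $5\Gamma(\frac14)$ is again a spectrum for $\mu_{\frac14}$; it is normal because it is unitary. For $M$ I would take the multiplication operator by $z = e^{2\pi i t} = e_1$ in the Fourier picture — this is unitary on $L^2(\mu_{\frac14})$ since $|z| = 1$ on the support — equivalently the shift $e_\gamma \mapsto e_{\gamma + 1}$ on the spectrum side (one must check $\gamma + 1$ stays in the relevant index set in the word picture, which is exactly incrementing the lowest digit with carry).

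Next I would construct the Cuntz isometries $S_0, S_1$ from the two-branch digit structure of $\Gamma(\frac14)$. Define $S_i$ on the basis by prepending the digit $i$ to the base-$4$ word: concretely $S_i e_\gamma = e_{i + 4\gamma}$ for $i \in \{0,1\}$. Since $\Gamma(\frac14) = \{0,1\} + 4\,\Gamma(\frac14)$ as a disjoint union (this is the self-similarity of the spectrum, from \cite{JoPe98}), the ranges of $S_0$ and $S_1$ are orthogonal and span $\hs$, so $S_i^* S_j = \delta_{ij} I$ and $S_0 S_0^* + S_1 S_1^* = I$: this is the representation of $\mathcal{O}_2$ referenced in Lemmas \ref{Lemma:Extend} and \ref{Lemma:Adjoint}, which I may invoke directly. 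It then remains to verify the two relations in \eqref{Ax:2}. For $S_0$ commuting with $U$: on basis vectors, $S_0 U e_\gamma = S_0 e_{5\gamma} = e_{20\gamma}$ while $U S_0 e_\gamma = U e_{4\gamma} = e_{20\gamma}$, so they agree — this is the clean, purely arithmetic identity $5 \cdot 4\gamma = 4 \cdot 5\gamma$. For $S_1^* U S_1 = MU$: compute $S_1 e_\gamma = e_{1 + 4\gamma}$, then $U S_1 e_\gamma = e_{5 + 20\gamma} = e_{1 + 4(1 + 5\gamma)}$, hence $S_1^* U S_1 e_\gamma = e_{1 + 5\gamma} = M U e_\gamma$ using $5\gamma \in \Gamma(\frac14)$'s word ending so that $1 + 5\gamma$ is the $M$-shift applied to $e_{5\gamma}$; this is the content of the intertwining identity of \cite{DJ09}, rephrased.

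The last point to address is irreducibility: I would show that the only bounded operator commuting with $\{S_0, S_1, U, M\}$ is scalar. Commuting with the full Cuntz representation $\{S_0, S_1\}$ already forces an operator $T$ to be a "multiplier" compatible with the tree structure — concretely, writing any vector in the word basis, $T$ must act as multiplication by a bounded function $\phi$ on the infinite word space (the Cuntz relations propagate $T$ down every branch). Then commuting additionally with $U$ (scaling words by $5$, which acts on the infinite-word / odometer picture as an ergodic-type transformation) and with $M$ (the carry-type shift) forces $\phi$ to be constant: these two dynamics together have no nontrivial common invariant sets, so $\phi$ is a.e. constant and $T$ is scalar. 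I expect \textbf{this irreducibility step to be the main obstacle}, since it requires identifying the joint commutant precisely and ruling out nonconstant multipliers — the interplay of the Cuntz tree with the two incompatible scalings ($\times 4$ built into $S_i$, $\times 5$ in $U$), which is exactly the "operator fractal" phenomenon, is what makes this delicate; the verification of the algebraic relations \eqref{Ax:1}–\eqref{Ax:2} themselves is routine once the digit combinatorics of $\Gamma(\frac14)$ are set up.
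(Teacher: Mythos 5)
Your construction of the operators and your verification of the relations \eqref{Ax:1}--\eqref{Ax:2} coincide with what the paper uses (and with what is already established in \cite{JKS11}): $Ue_\gamma = e_{5\gamma}$, $M = M_{e_1}$, $S_i e_\gamma = e_{4\gamma+i}$, the Cuntz relations coming from the disjoint splitting $\Gamma(\frac14) = 4\Gamma(\frac14)\sqcup(4\Gamma(\frac14)+1)$, commutation $S_0U = US_0$ from $5\cdot 4\gamma = 4\cdot 5\gamma$, and $S_1^*US_1 = M_{e_1}U$ via the extension of $S_1$ to all $e_n$, $n\in\Z$ (Lemma \ref{Lemma:Extend}). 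That part is routine, as you say, and it does establish that the relations are realized by nonzero operators on $L^2(\mu_{\frac14})$ (the paper packages this slightly differently, by checking that $S_0,S_1$ do not lie in the two-sided ideal generated by the relations inside the $*$-algebra they generate, so that the generators survive in the quotient).

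The genuine gap is in the irreducibility step, exactly where you flagged the main obstacle. Your argument rests on the claim that any bounded $T$ commuting with $\{S_0,S_1,S_0^*,S_1^*\}$ must act as multiplication by a bounded function on the infinite-word space $X$; this is not justified, and it is not a correct description of the commutant of a representation of $\mathcal{O}_2$ in general. In fact the situation is simpler than your plan assumes: for this particular permutative representation --- $S_i$ appends the digit $i$, finitely many adjoints pull every basis vector back to $e_0$, and $e_0$ is fixed by $S_0$ --- the commutant of $\{S_0,S_1\}$ \emph{alone} is already $\C I$. The paper does not reprove this; it cites \cite{JKS12b} for the irreducibility of this $\mathcal{O}_2$ representation and then observes that a family of operators containing an irreducibly acting subfamily is itself irreducible, so $U$ and $M$ play no role in the irreducibility argument. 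To close your gap you should either invoke that known irreducibility, or prove it directly, e.g.\ by showing that a projection commuting with $S_0,S_1,S_0^*,S_1^*$ must contain or annihilate $e_0$ (the one-dimensional unitary part of the Wold decomposition of $S_0$, Proposition \ref{Prop:WoldS_0}) and then propagating along the tree of words $S_{\eta}e_0$ to conclude that the projection is $0$ or $I$. The subsequent appeal to joint ergodicity of the $\times 4$ and $\times 5$ dynamics is therefore both unsupported at its first step and unnecessary.
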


We postpone the proof of the theorem until later in this section.

Our specific application of \eqref{Ax:1} and $\eqref{Ax:2}$ is to the Hilbert space $L^2(\mu_{\frac{1}{4}})$.   The Hilbert space $L^2(\mu_{\frac{1}{4}})$ has an associated spectrum, $\Gamma(\frac{1}{4})$, defined by
\begin{equation}\label{Eqn:Gamma}
 \Gamma\Bigl(\frac{1}{4}\Bigr) = 
\Biggl\{ 
\sum_{i=0}^m a_i 4^i \: : \: a_i\in \Bigl\{0,1 \Bigr\}
\Biggr\} = \{ 0, 1, 4, 5, 16, 17, 20, \ldots \}
\end{equation}
which in turn gives rise to an orthonormal basis (ONB) for $L^2(\mu_{\frac{1}{4}})$:
\begin{equation}
E\Bigl(\Gamma\Bigl(\frac{1}{4}\Bigr)\Bigr) = \Bigl\{  e_{\gamma}(t) = e^{2\pi i \gamma t} \: | \: \gamma \in \Gamma\Bigl(\frac{1}{4}\Bigr) \Bigr\}
\end{equation}
\cite[Corollary 5.9]{JoPe98}.  By  \cite[Proposition 5.1]{DJ09}, the scaled set $5\Gamma(\frac{1}{4})$ is also a spectrum for $\mu_{\frac14}$, which leads us to define the unitary operator $U$ by
\begin{equation}\label{Defn:U}
Ue_{\gamma} = e_{5\gamma}.
\end{equation}
The operator $M = M_{e_1}$ is multiplication by the exponential $e_1$:
\begin{equation}\label{Defn:M}
M_{e_1}e_{\gamma} = e_{\gamma + 1}.
\end{equation}

In \cite[Theorem 4.8]{JKS11}, the authors showed that the operator $U$ has a fractal-like nature which arises from a representation of $\mathcal{O}_2$ on $L^2(\mu_{\frac14})$ given by the operators
\begin{equation}\label{Eqn:Cuntz}
S_0 e_{\gamma} = e_{4\gamma} \textrm{ and } S_1 e_{\gamma} = e_{4\gamma+1}.
\end{equation}
The Cuntz operators $S_0$ and $S_1$ give rise to an ordering of the basis $E(\Gamma(\frac{1}{4}))$ and a resulting orthogonal decomposition of $L^2(\mu_{\frac{1}{4}})$ given by
\begin{equation}\label{Eqn:Decomp}
L^2(\mu_{\frac{1}{4}}) = \textrm{span}\{e_0\} \oplus \bigoplus_{k=0}^{\infty} S_0^kS_1L^2(\mu_{\frac14}).
\end{equation}
The subspaces $S_0^kS_1L^2(\mu_{\frac14})$ have the property that the matrix of $U$ restricted to each subspace is the same.  

The second set of axioms \eqref{Ax:2} is satifsied by $U$, $M_{e_1}$, $S_0$ and $S_1$.  We have
\begin{equation}\label{Eqn:Rel}
S_0U = US_0  \textrm{ and } S_1^* U S_1 = M_{e_1}U
\end{equation}
by \cite[Theorem 4.3]{JKS11} and \cite[Theorem 4.10]{JKS11}, respectively.

The operator $U$ can be written
\begin{equation}\label{Eqn:U_Decomp} 
U =  P_{e_0} \oplus \bigoplus_{k=0}^{\infty} M_{e_1}U,
\end{equation} 
where $P_{e_0}$ is the orthogonal projection onto $\textrm{span}\{e_0\}$; $P_{e_0}$ is the projection onto the unitary part of the Wold decomposition of $S_0$ (see Section \ref{Subsec:Wold} below).

We now prove Theorem \ref{Thm:GenRel}.

\noindent\textit{Proof of Theorem \ref{Thm:GenRel}}.  
Consider the $*$-algebra $\frak{A}$ generated by $U$, $M_{e_1}$, and the representation of $\mathcal{O}_2$ in $L^2(\mu_{\frac14})$.  Note that the representation of $\mathcal{O}_2$ carries its own relations, but as of now, with an abuse of notation, we have specified no relations among $U$, $M_{e_1}$, and the representation of $\mathcal{O}_2$.

Let $\frak{I}$ be the two-sided ideal generated by the relations which have already been established in \eqref{Eqn:Rel}:
\begin{equation}
S_0U - US_0 = 0 \textrm { and } S_1^*US_1 - M_{e_1}U = 0.
\end{equation}
We want to establish that $S_0$ and $S_1$ are not in $\frak{I}$.  But neither can be in $\frak{I}$ because if for $i = 0$ or  $i =1$
\[ S_i = c_1 X_1 (S_0U - US_0) Y_1 + c_2 X_2 (S_1^*US_1 - M_{e_1}U) Y_2\]
for $X_1, X_2, Y_1, Y_2\in \frak{A}$, then we could multiply both sides by $S_i^*$, which tells us that $I\in\frak{I}$, or that $S_i = 0$ in the $*$-algebra $\frak{A}/\frak{I}$, which is not true.

We explicitly establish that the representation of $\mathcal{O}_2$ is irreducible in \cite{JKS12b} so the representation of $\frak{A}/\frak{I}$ is also irreducible.
\hfill$\Box$

The matrix of $U$ with respect to the decomposition we have just described is given by
\begin{equation*}\label{Eqn:MatrixU_03}
\begin{tabular}{c||c|c|c|c|c|c}
                         & \small$\textrm{span}\{e_0\}$ & $S_1$  & $S_0S_1$ & $S_0^2S_1$ & $S_0^3S_1$ & $\cdots$\\
\hline
\hline
$\small\textrm{span}\{e_0\}$ & $1$ & $0$ & $0$ & $0$ & $0$& $\cdots$\\ 
\hline $S_1$  &$0$ & $M_{e_1}U$ &    $0$          &       $0$    &     $0$    &    $\cdots$ \\
\hline 
$S_0S_1$  & $0$ &  $0$           &$M_{e_1}U$ &       $0$    &     $0$    &    $\cdots$ \\
\hline
$S_0^2S_1$  & $0$ &  $0$           &   $0$            &        $M_{e_1}U$   &     $0$    &    $\cdots$ \\
\hline
$S_0^3S_1$  & $0$ &  $0$           &   $0$            &        $0$                &    $M_{e_1}U$  &    $\cdots$ \\
\hline
                        & \vdots &  $\vdots$   &    $\vdots$   &      $\vdots$           &   $\vdots$         &  $\ddots$ \\ 
\end{tabular} \quad
\end{equation*}
\cite[Theorem 4.10]{JKS11}.

To understand the operator $U$ better, we wish to compute its spectrum.  Currently, the spectrum of the operator $U$ is unknown, and the results in this paper are inroads to understanding the spectrum of $U$.

\section{The Cuntz operators and $U$}\label{Sec:Cuntz}

In our decomposition theorems we will employ a certain representation of an algebra on generators and relations.  Below, we prove that the representation has a number of properties to be used later.  Results from operator theory are in Section \ref{Subsec:Wold}; in Section \ref{Sec:Induced} we work with the theory of measures on boundary spaces. Inside algebras on generators and relations we identify a particular representation (Lemma \ref{Lemma:Adjoint}) of the Cuntz algebra $\mathcal{O}_2$, the Cuntz algebra with two generators. 
\subsection{Properties of $S_0$ and $S_1$}
The operators $S_0$ and $S_1$ defined on $E(\Gamma(\frac{1}{4}))$ in Equation \eqref{Eqn:Cuntz}
satisfy the Cuntz relations by \cite[Proposition 3.2]{JKS11}.   In fact, 
\begin{lemma}\label{Lemma:Extend}
The formulas for $S_0$ and $S_1$ extend to any $e_n$ where $n\in\Z$: 
\[S_0 e_{n} = e_{4n} \textrm{ and } S_1 e_{n} = e_{4n+1}.\]
\end{lemma}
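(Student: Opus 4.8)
The plan is to verify the claimed formulas directly on exponential basis vectors by exploiting the intrinsic structure of the Cantor measure $\mu_{\frac14}$ and the unitarity of $S_0$ and $S_1$, rather than relying on the combinatorial description of $\Gamma(\frac14)$ used in \cite{JKS11}. The key point is that the maps $e_n \mapsto e_{4n}$ and $e_n \mapsto e_{4n+1}$, defined a priori only for $n \in \Gamma(\frac14)$, are determined by isometric properties that extend automatically: the scaling identity for $\widehat{\mu_{\frac14}}$, namely $\widehat{\mu_{\frac14}}(\xi) = \widehat{\mu_{\frac14}}(\xi/4)\,\overline{(1 + e^{2\pi i \xi /4})}/2$ (or the analogous product formula), shows that the inner products $\langle e_{4m}, e_{4n}\rangle_{L^2(\mu_{\frac14})}$, $\langle e_{4m+1}, e_{4n+1}\rangle$, and $\langle e_{4m}, e_{4n+1}\rangle$ depend on $m,n \in \Z$ in a way that matches $\langle e_m, e_n\rangle$, $\langle e_m, e_n\rangle$, and $0$ respectively.

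First I would record the relevant Fourier-transform identity for $\mu_{\frac14}$ and use it to compute, for arbitrary $m,n \in \Z$, that $\langle S_0 e_m, S_0 e_n \rangle = \langle e_{4m}, e_{4n}\rangle = \widehat{\mu_{\frac14}}(4(n-m)) = \widehat{\mu_{\frac14}}(n-m) = \langle e_m, e_n\rangle$, where the middle equality uses that $\widehat{\mu_{\frac14}}(4k) = \widehat{\mu_{\frac14}}(k)$ for all $k \in \Z$ (which follows from the product formula since the extra factor is $(1 + e^{2\pi i k})/2 = 1$ for integer $k$). Then I would do the same for $S_1$ and for the cross term, establishing that the prescription $e_n \mapsto e_{4n}$, $e_n \mapsto e_{4n+1}$ on $\{e_n : n \in \Z\}$ is consistent with — and hence a genuine extension of — the already-defined isometries $S_0, S_1$ on $E(\Gamma(\frac14))$. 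Because $\{e_n : n \in \Z\}$ spans a dense subspace and the extended maps are bounded (indeed isometric on finite linear combinations by the inner-product computation), the extension is unique and agrees with $S_0, S_1$ on all of $L^2(\mu_{\frac14})$.

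The main obstacle is purely bookkeeping: one must be careful that the formulas $S_0 e_\gamma = e_{4\gamma}$ as originally given are only \emph{defined} for $\gamma$ in the spectrum, so the claim is genuinely an extension result and not a tautology; the content is that the \emph{operator} $S_0$ (already defined on all of $L^2(\mu_{\frac14})$ via the ONB $E(\Gamma(\frac14))$) happens to send the non-basis vector $e_n$ to $e_{4n}$ for every integer $n$. To close this I would expand an arbitrary $e_n$ in the ONB $E(\Gamma(\frac14))$, apply $S_0$ term by term, and recognize the result as the ONB expansion of $e_{4n}$; the computation reduces again to the identity $\widehat{\mu_{\frac14}}(4k) = \widehat{\mu_{\frac14}}(k)$ together with the fact that $\{e_{4\gamma} : \gamma \in \Gamma(\frac14)\}$ and $\{e_{4\gamma+1} : \gamma \in \Gamma(\frac14)\}$ are the two halves of $E(\Gamma(\frac14))$ corresponding to $\Gamma(\frac14) = 4\Gamma(\frac14) \sqcup (4\Gamma(\frac14)+1)$. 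I expect no serious difficulty beyond organizing these coefficient comparisons cleanly.
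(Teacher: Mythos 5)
Your proposal is correct and follows essentially the same route as the paper: expand $e_n$ in the ONB $E(\Gamma(\frac14))$, split according to $\Gamma(\frac14)=4\Gamma(\frac14)\sqcup(4\Gamma(\frac14)+1)$, and use the scaling invariance $\wmu(4m)=\wmu(m)$ together with $\wmu(\text{odd integer})=0$ to recognize the image as $e_{4n}$ (resp.\ $e_{4n+1}$). One small slip: the one-step product formula for $\wmu$ should carry the factor $(1+e^{\pi i\xi})/2$ (which does vanish at odd integers), not $(1+e^{2\pi i\xi/4})/2$; the two consequences you actually invoke are nonetheless the correct ones.
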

\begin{proof}
The straightforward computations rely on the parity of $n$, the fact $\wmu(\textrm{odd integer}) = 0$, the scaling invariance $\wmu(4m) = \wmu(m)$ for each $m\in\Z$, the decomposition $\Gamma(\frac{1}{4}) =  4\Gamma(\frac{1}{4}) \sqcup (4\Gamma(\frac{1}{4}) + 1)$, and the containment $\Gamma(\frac{1}{4})\subset \Z$.\end{proof}
The adjoints of $S_0$ and $S_1$ on the basis $E(\Gamma(\frac14))$ are given by
\begin{equation}\label{Eqn:S0Adj}
S_0^* e_{\gamma} = \begin{cases} e_{\frac{\gamma}{4}} & \gamma \equiv 0 \pmod{4} \\
                                                                                          0  & \gamma \equiv 1 \pmod{4}\end{cases}
\end{equation}
and
\begin{equation}\label{Eqn:S1Adj}
 S_1^* e_{\gamma} = \begin{cases} e_{\frac{\gamma -1}{4}} & \gamma \equiv 1 \pmod{4} \\
                                                                                          0  & \gamma \equiv 0 \pmod{4}.\end{cases}
\end{equation}
 \cite[Equations (3.3) and (3.4)]{JKS11}.
When we ask if similar equations hold for $e_n$, $n\in\Z$, we find that three times out of four, the expected equations hold:
\begin{lemma}\label{Lemma:Adjoint}The adjoints of $S_0$ and $S_1$ satisfy the following equations:
\begin{itemize}
\item If $n\equiv 0 \pmod{4}$, then $S_0^*e_n = e_{\frac{n}{4}}$.  
\item If $n\equiv 1,3 \pmod{4}$, then $S_0^* e_n = 0$. 
\item If $n\equiv 1 \pmod{4}$, then $S_1^*e_n = e_{\frac{n-1}{4}}$.  
\item If $n\equiv 0,2 \pmod{4}$, then $S_1^* e_n = 0$. 
\end{itemize}
\end{lemma}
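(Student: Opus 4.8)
The plan is to compute $S_0^* e_n$ and $S_1^* e_n$ directly from the defining inner-product formulas, using Lemma \ref{Lemma:Extend} to expand $S_0$ and $S_1$ on \emph{all} of $\{e_m : m \in \Z\}$, and then testing against the orthonormal basis $E(\Gamma(\frac14))$. Concretely, for a fixed $n \in \Z$ and any $\gamma \in \Gamma(\frac14)$ one has
\[
\langle S_0^* e_n, e_\gamma \rangle = \langle e_n, S_0 e_\gamma \rangle = \langle e_n, e_{4\gamma} \rangle = \wmu(4\gamma - n),
\]
and similarly $\langle S_1^* e_n, e_\gamma \rangle = \wmu(4\gamma + 1 - n)$. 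So the whole computation reduces to deciding, for each residue class of $n$ modulo $4$, for which $\gamma \in \Gamma(\frac14)$ the quantity $\wmu(4\gamma - n)$ (resp. $\wmu(4\gamma + 1 - n)$) is nonzero.

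The key arithmetic facts are exactly the ones invoked in the proof of Lemma \ref{Lemma:Extend}: $\wmu$ vanishes on odd integers, $\wmu(4m) = \wmu(m)$ for all $m \in \Z$, $\wmu(0) = 1$, and $\Gamma(\frac14) \subset \Z$ with every element of $\Gamma(\frac14)$ congruent to $0$ or $1$ modulo $4$ (from the partition $\Gamma(\frac14) = 4\Gamma(\frac14) \sqcup (4\Gamma(\frac14)+1)$). First I would handle $S_0^*$. If $n \equiv 0 \pmod 4$, write $n = 4n'$; then $4\gamma - n = 4(\gamma - n')$, so $\wmu(4\gamma - n) = \wmu(\gamma - n')$, and this is nonzero only when $\gamma - n'$ is such that $\wmu$ does not vanish. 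Since $\gamma$ ranges over a spectrum and $\{e_\gamma\}$ is orthonormal, the only surviving term is $\gamma = n'$ (note $n' = n/4$ need not lie in $\Gamma(\frac14)$ a priori, but $\wmu(\gamma - n/4) = 0$ for all $\gamma \in \Gamma(\frac14)$ unless $n/4 \in \Gamma(\frac14)$, in which case orthonormality isolates that single term), giving $S_0^* e_n = e_{n/4}$. If $n \equiv 1$ or $n \equiv 3 \pmod 4$, then $4\gamma - n$ is odd for every $\gamma$, so $\wmu(4\gamma - n) = 0$ and hence $S_0^* e_n = 0$. The argument for $S_1^*$ is parallel: $4\gamma + 1 - n$ is odd precisely when $n$ is even, covering the cases $n \equiv 0, 2 \pmod 4$, and when $n \equiv 1 \pmod 4$ write $n = 4n' + 1$ so that $4\gamma + 1 - n = 4(\gamma - n')$, whence $\wmu(4\gamma + 1 - n) = \wmu(\gamma - n')$ and only $\gamma = n'= (n-1)/4$ survives, yielding $S_1^* e_n = e_{(n-1)/4}$.

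The one point requiring a little care — and the closest thing to an obstacle — is the edge case where $n/4$ (resp. $(n-1)/4$) is an integer but does \emph{not} belong to $\Gamma(\frac14)$; then the claimed formula $S_0^* e_n = e_{n/4}$ still makes sense in $L^2(\mu_{\frac14})$ because $e_{n/4}$ is a perfectly good (non-basis) vector there, and one must check the identity as an equality of vectors by pairing both sides against the \emph{full} ONB $E(\Gamma(\frac14))$. This is handled by the same inner-product computation: $\langle e_{n/4}, e_\gamma \rangle = \wmu(\gamma - n/4)$ for $\gamma \in \Gamma(\frac14)$, which matches $\langle S_0^* e_n, e_\gamma\rangle$ term by term, so the two vectors are equal. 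I expect this verification, together with carefully tracking the parity of $4\gamma \pm 1 - n$ across the four residue classes, to be the only non-mechanical part; everything else is the routine bookkeeping already rehearsed in Lemma \ref{Lemma:Extend}.
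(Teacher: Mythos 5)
Your computation is correct and uses precisely the ingredients the paper itself invokes for Lemma \ref{Lemma:Extend} and for its $n\equiv 2\pmod 4$ example (vanishing of $\wmu$ on odd integers, the scaling identity $\wmu(4m)=\wmu(m)$, and pairing against the ONB $E(\Gamma(\frac14))$); the paper actually states Lemma \ref{Lemma:Adjoint} without proof, and your argument is the natural writeup of the omitted verification, in the same spirit as the paper's surrounding computations. One small caveat: your parenthetical claim that $\wmu(\gamma - n/4)=0$ for all $\gamma\in\Gamma(\frac14)$ unless $n/4\in\Gamma(\frac14)$ is false (the paper's own $n=2$ example shows all these coefficients can be nonzero), but this aside is harmless because your final paragraph correctly establishes $S_0^*e_n=e_{n/4}$ by matching \emph{all} ONB coefficients of the two vectors rather than by isolating a single surviving term.
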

\begin{example}When $n\equiv 2 \pmod 4$
\end{example}
We note that if $n\equiv 2\pmod{4}$, then there is no clean formula for $S_0^*e_n$.  For example, when $n = 2$, we have 
\begin{equation*}
\begin{split}
S_0^*e_2 
& = S_0^*\sum_{\gamma\in \Gamma} \wmu(\gamma - 2) e_{\gamma}\\
& = S_0^*\sum_{\gamma\in \Gamma} \wmu(4\gamma - 2) e_{4\gamma} + S_0^*\sum_{\gamma\in \Gamma} \wmu(4\gamma + 1 - 2) e_{4\gamma+1}\\
& = \sum_{\gamma\in \Gamma} \wmu(4\gamma - 2) e_{\gamma},\\
\end{split}
\end{equation*}
and \textit{none} of the coefficients $\wmu(4\gamma - 2)$ is zero, since $4\gamma - 2$ can never be written in the form $4^k(2n + 1)$ for any $k\in \N\cup \{0\}$, $n\in \Z$.
\hfill$\Diamond$

\begin{definition}The \textbf{commutant} of $U$, denoted $\{U\}'$, is the set of all bounded operators which commute with both $U$ and $U^*$.
\end{definition}
\begin{lemma}\label{Lemma:S0UUS0}
The operator $S_0$ belongs to $\{U\}'$.
\end{lemma}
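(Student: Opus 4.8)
The statement $S_0 \in \{U\}'$ means $S_0$ commutes with both $U$ and $U^*$. The commutation $S_0 U = U S_0$ is already in hand: it is Equation \eqref{Eqn:Rel}, established in \cite[Theorem 4.3]{JKS11}, and is part of the standing relations \eqref{Ax:2}. So the entire content of the lemma is the second half: $S_0 U^* = U^* S_0$, equivalently $S_0^* U = U S_0^*$ after taking adjoints. The plan is to derive this from $S_0 U = U S_0$ together with the fact that $U$ is unitary (hence $U^* = U^{-1}$) and the Cuntz relations for $S_0, S_1$.

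\textbf{First step.} Starting from $U S_0 = S_0 U$, multiply on the left and right by $U^* = U^{-1}$ to obtain $S_0 U^{-1} = U^{-1} S_0$, i.e. $S_0 U^* = U^* S_0$. This is the whole argument in one line, since $U$ is unitary. Thus the only thing to verify carefully is that $U$ is genuinely unitary on $L^2(\mu_{\frac14})$ — but this is exactly the point of \cite[Proposition 5.1]{DJ09} / Equation \eqref{Defn:U}: $U$ sends the orthonormal basis $E(\Gamma(\tfrac14))$ to the orthonormal basis $E(5\Gamma(\tfrac14))$, so it extends to a unitary. Combined with $S_0 U = U S_0$, both $S_0 U = U S_0$ and $S_0 U^* = U^* S_0$ hold, which is the definition of $S_0 \in \{U\}'$.

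\textbf{Alternative / cross-check.} One can also verify $S_0 U^* = U^* S_0$ directly on the basis. By Lemma \ref{Lemma:Extend}, $S_0 e_n = e_{4n}$ for all $n \in \Z$, and $U^* e_{\gamma} = e_{\gamma/5}$ whenever $\gamma/5 \in \Gamma(\tfrac14)$ and $U^* e_\gamma$ is otherwise the appropriate expansion. Applying $S_0 U^*$ and $U^* S_0$ to a basis vector $e_\gamma$ and using that multiplication by $4$ commutes with division by $5$ on $\Z$ (they are coprime, so $4n/5 \in \Gamma$ iff $n/5 \in \Gamma$, and then $4n/5 = 4(n/5)$) gives agreement. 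I would present the unitary argument as the main proof and mention this computation only as a remark, since it is the less slick route.

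\textbf{Main obstacle.} There is essentially no obstacle: the lemma is a one-line consequence of unitarity of $U$ applied to the already-known relation $S_0 U = U S_0$. The only subtlety worth stating explicitly is that this uses unitarity of $U$ in an essential way — the relation $S_1^* U S_1 = M_{e_1} U$ involving $S_1$ is genuinely more complex and does *not* give $S_1 \in \{U\}'$, so the asymmetry between $S_0$ and $S_1$ should be noted. I expect the write-up to be just a few sentences.

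\begin{proof}
By \eqref{Eqn:Rel} (see \cite[Theorem 4.3]{JKS11}) we have $S_0 U = U S_0$. Since $U$ is unitary on $L^2(\mu_{\frac14})$, we have $U^* = U^{-1}$, and multiplying $U S_0 = S_0 U$ on both sides by $U^{-1}$ yields $S_0 U^{-1} = U^{-1} S_0$, i.e. $S_0 U^* = U^* S_0$. Thus $S_0$ commutes with both $U$ and $U^*$, so $S_0 \in \{U\}'$.
\end{proof}
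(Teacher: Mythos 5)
Your proof is correct. The only divergence from the paper is in how the second commutation relation is obtained: the paper notes that $U$ is normal and invokes Fuglede's theorem to get that $U$ commutes with $S_0^*$ (equivalently, $S_0 U^* = U^* S_0$), whereas you use the elementary observation that for a \emph{unitary} $U$ one may simply conjugate $US_0 = S_0U$ by $U^{-1} = U^*$. Your route is more self-contained and avoids citing \cite{Fug50}; the paper's route is the one that survives in the axiomatic setting \eqref{Ax:1}, where $U$ is only assumed normal rather than unitary, so Fuglede is genuinely needed there. For the concrete operator $Ue_\gamma = e_{5\gamma}$ on $L^2(\mu_{\frac14})$ both arguments apply, and each is a one-liner. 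One caution: your ``alternative cross-check'' on the basis is not rigorous as stated, since $U^*e_\gamma$ equals $e_{\gamma/5}$ only when $\gamma \in 5\Gamma(\frac14)$ and is otherwise a nontrivial expansion; since you only offer it as a remark and your main argument does not depend on it, this does not affect the validity of the proof.
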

\begin{proof}
The operators $U$ and $S_0$ commute \cite[Theorem 4.3]{JKS11}.   Since $U$ is unitary, $U$ is normal, so $U$ commutes with $S_0^*$ by Fuglede's theorem \cite[Theorem 1, p.~35]{Fug50}.  
\end{proof}

\noindent\textbf{Remark:  }It is also possible to check directly on the ONB $\{e_{\gamma} : \gamma\in \Gamma(\frac{1}{4})\}$ that
\[ \langle U^*S_0 e_{\gamma}, e_{\xi} \rangle = \langle S_0U^* e_{\gamma}, e_{\xi}\rangle \]
for all $\gamma, \xi\in\Gamma(\frac{1}{4})$.

On the other hand, $U$ and $S_1$ do not commute, although $U$ and the range projection $S_1S_1^*$ do commute.  To see this, use the fact that $U$ commutes with $S_0$ and $S_0^*$ and $S_0S_0^* +  S_1S_1^* = I$.

\subsection{The Wold decomposition of $S_0$}\label{Subsec:Wold}

The general theory of Wold decompositions applies to an isometry $A$ on a Hilbert space $\hs$.  The Wold decomposition of $A$ on $\hs$ is an orthogonal decomposition of $\hs = \hs_{s} \oplus \hs_{u}$ such that
\begin{enumerate}[(i)]
\item $A|_{\hs_{u}}$ is unitary.
\item $A|_{\hs_{s}}$ is a shift.
\item Both $\hs_{s}$ and $\hs_{u}$ are invariant under $A$
\end{enumerate}
\cite[p.~96]{BJ2002} or \cite{SzNagyFoias}.

In addition, the unitary part $\hs_{u}$ of the Wold decomposition of $A$ has two other properties:
\begin{proposition}\label{Prop:H_u}\rm\cite[p.~96]{BJ2002} \it Suppose $A$ is an isometry on $\hs$ with Wold decomposition $\hs=\hs_{u}\oplus \hs_{s}$.  Then
\begin{enumerate}[\rm(1)]
\item $A^n(A^n)^*\rightarrow P_{\hs_{u}}$ as $n\rightarrow\infty$, where $P_{\hs_{u}}$ is the projection onto $\hs_{u}$.  (The convergence is in the strong operator topology and is in fact monotonic.)
\item $\hs_{u} = \{ f\in \hs \:  : \: \|(A^n)^*f\| = \|f\| \textrm{ for all }n\in\N\}$.
\end{enumerate}
\end{proposition}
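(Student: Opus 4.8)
The plan is to run everything through the single family of projections $Q_n:=A^n(A^n)^*$ and to exploit the fact that, in the Wold decomposition, the unitary part is the intersection of the descending ranges of the powers of $A$. Concretely, writing $\mathcal{L}:=\hs\ominus A\hs$ for the wandering subspace (which is closed since $A$ is an isometry), one has $A^n\hs=\hs\ominus\bigoplus_{k=0}^{n-1}A^k\mathcal{L}$, so that $\bigcap_{n\ge 0}A^n\hs=\hs\ominus\bigoplus_{k\ge 0}A^k\mathcal{L}=\hs\ominus\hs_{s}=\hs_{u}$. I would record this identification first, since both parts of the proposition reduce to it.

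Next I would establish the elementary properties of $Q_n$. Because $A$ is an isometry, so is $A^n$, hence $(A^n)^*A^n=I$ and therefore $Q_n=A^n(A^n)^*$ is the orthogonal projection of $\hs$ onto $A^n\hs=\operatorname{ran}(A^n)$. Since $A^{n+1}\hs\subseteq A^n\hs$, the $Q_n$ form a \emph{decreasing} sequence of orthogonal projections, $Q_{n+1}\le Q_n$. A decreasing sequence of orthogonal projections converges in the strong operator topology — and monotonically — to the orthogonal projection onto the intersection of their ranges: for $n<m$ one has
\[
\|Q_nf-Q_mf\|^2=\|Q_nf\|^2-\|Q_mf\|^2,
\]
and the scalars $\|Q_nf\|^2=\langle Q_nf,f\rangle$ are non-increasing and bounded below, hence Cauchy, so $Q_nf$ converges; the limit operator is readily checked to be a projection with range $\bigcap_n\operatorname{ran}(Q_n)=\bigcap_n A^n\hs=\hs_{u}$. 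Together with the displayed identification this gives part (1), with the word ``monotonic'' referring to $Q_{n+1}\le Q_n$.

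For part (2) I would compute, for an arbitrary $f\in\hs$, using that $Q_n$ is a projection,
\[
\|(A^n)^*f\|^2=\langle A^n(A^n)^*f,f\rangle=\langle Q_nf,f\rangle=\|Q_nf\|^2 .
\]
By part (1) this is a non-increasing sequence whose limit is $\langle P_{\hs_{u}}f,f\rangle=\|P_{\hs_{u}}f\|^2$, and whose value at $n=0$ is $\|f\|^2$. Hence $\|(A^n)^*f\|=\|f\|$ for all $n\in\N$ if and only if the limit already equals $\|f\|^2$, i.e.\ $\|P_{\hs_{u}}f\|^2=\|f\|^2$, which — since $P_{\hs_{u}}$ is an orthogonal projection — holds if and only if $P_{\hs_{u}}f=f$, that is, $f\in\hs_{u}$. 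There is no serious obstacle here; the only point requiring a little care is the (routine) Hilbert-space fact that a decreasing sequence of projections converges strongly and monotonically to the projection onto the intersection of the ranges, and the bookkeeping identifying that intersection with $\hs_u$.
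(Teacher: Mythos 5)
Your proof is correct and complete: identifying $Q_n=A^n(A^n)^*$ as the orthogonal projection onto $A^n\hs$, using the monotone decrease of these projections, and identifying $\bigcap_n A^n\hs$ with $\hs_u$ via the wandering subspace $\hs\ominus A\hs$ is exactly the standard argument. The paper itself offers no proof here --- it simply cites \cite[p.~96]{BJ2002} --- and your write-up supplies essentially the proof that reference gives, so there is nothing to correct.
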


\begin{proposition}\label{Prop:WoldS_0}
Let $S_0$ be the operator defined on $L^2(\mu_{\frac14})$ in Equation \eqref{Eqn:Cuntz}:  $S_0 e_{\gamma} = e_{4\gamma}$ for $\gamma\in \Gamma(\frac14)$.  Then the following two equivalent conditions hold:
\begin{enumerate}[\rm(a)]
\item $\displaystyle\lim_{n\rightarrow\infty} S_0^n(S_0^n)^*f = \langle e_0, f\rangle e_0$.
\item $\| (S_0^n)^*f \| = \|f\|$ for all $n\in\N$ if and only if $f = ce_0$ for some $c\in\C$.
\end{enumerate}
\end{proposition}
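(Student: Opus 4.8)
The plan is to compute the powers of $S_0^*$ explicitly on the ONB $E(\Gamma(\tfrac14))$, read off (a) and (b) directly from these formulas, and then obtain the asserted equivalence of (a) and (b) for free from Proposition \ref{Prop:H_u} applied to the isometry $S_0$ (recall $S_0^*S_0 = I$ by the Cuntz relations, so the Wold decomposition is available).

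First I would establish, by induction on $n$ using \eqref{Eqn:S0Adj} together with the digit description $\Gamma(\tfrac14) = \{\sum_i a_i 4^i : a_i \in \{0,1\}\}$, that for $\gamma\in\Gamma(\tfrac14)$
\[
(S_0^n)^* e_\gamma =
\begin{cases} e_{\gamma/4^n} & \text{if } 4^n \mid \gamma,\\ 0 & \text{otherwise,}\end{cases}
\]
where one notes that $\gamma \in \Gamma(\tfrac14)$ with $4^n \mid \gamma$ forces $\gamma/4^n \in \Gamma(\tfrac14)$, since the first $n$ base-$4$ digits of $\gamma$ then vanish. Consequently, for $f = \sum_{\gamma\in\Gamma} c_\gamma e_\gamma \in L^2(\mu_{\frac14})$,
\[
S_0^n (S_0^n)^* f = \sum_{\gamma\in\Gamma,\ 4^n \mid \gamma} c_\gamma e_\gamma,
\qquad
\|(S_0^n)^* f\|^2 = \sum_{\gamma\in\Gamma,\ 4^n \mid \gamma} |c_\gamma|^2 .
\]

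The key elementary observation is then that $\bigcap_n \{\gamma \in \Gamma(\tfrac14) : 4^n \mid \gamma\} = \{0\}$: a nonzero multiple of $4^n$ is at least $4^n$, so each fixed nonzero $\gamma\in\Gamma(\tfrac14)$ fails to be divisible by $4^n$ once $4^n > \gamma$. Feeding this into the two displayed formulas gives $\|S_0^n(S_0^n)^* f - \langle e_0,f\rangle e_0\|^2 = \sum_{\gamma\neq 0,\ 4^n\mid\gamma} |c_\gamma|^2 \to 0$, which is (a); and it shows that $\|(S_0^n)^*f\| = \|f\|$ for all $n$ forces $c_\gamma = 0$ for every $\gamma\neq 0$, i.e.\ $f = c e_0$, while conversely $(S_0^n)^* e_0 = e_0$ gives the reverse implication — this is (b).

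For the equivalence of (a) and (b) asserted in the statement I would simply invoke Proposition \ref{Prop:H_u}: part (1) identifies the strong limit of $S_0^n(S_0^n)^*$ with the projection onto the unitary part $\hs_u$ of the Wold decomposition of $S_0$, so (a) is precisely the statement $\hs_u = \mathrm{span}\{e_0\}$; part (2) gives $\hs_u = \{f : \|(S_0^n)^*f\| = \|f\| \text{ for all } n\}$, so (b) is also precisely the statement $\hs_u = \mathrm{span}\{e_0\}$. Hence (a) $\Leftrightarrow$ (b). I do not expect a genuine obstacle here; the only point requiring a little care is the bookkeeping that $4^n \mid \gamma$ keeps $\gamma/4^n$ inside the spectrum $\Gamma(\tfrac14)$, which is immediate from the base-$4$ digit description.
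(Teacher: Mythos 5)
Your proposal is correct and follows essentially the same route as the paper: both arguments reduce everything to identifying the unitary part of the Wold decomposition of $S_0$ via Proposition \ref{Prop:H_u}, and both hinge on the observation that iterating $S_0^*$ annihilates every basis vector except $e_0$ in the limit. The only difference is bookkeeping — the paper indexes the surviving coefficients by the subspaces $S_0^kS_1L^2(\mu_{\frac14})$ (i.e.\ $\gamma\in 4^k(1+4\Gamma)$ with $\bigcap_k 4^k(1+4\Gamma)=\emptyset$), while you index them by the divisibility condition $4^n\mid\gamma$; these are the same computation.
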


\begin{proof}
We will use the general Wold decomposition's characterization of $\hs_{u}$ in Proposition \ref{Prop:H_u}, part $(2)$ to show that for $A = S_0$, the unitary space $\hs_{u} = \textrm{span}\{e_0\}$.  Suppose $f\in L^2(\mu)\ominus \textrm{span}\{e_0\}$.  By \cite[Proposition 3.4]{JKS11},
\[ L^2(\mu_{\frac14}) \ominus \textrm{span}\{e_0\} = \bigoplus_{k = 0}^{\infty} S_0^kS_1(L^2(\mu_{\frac14})).\]
Therefore we can write $f$ as
\[f = \sum_{k = 0}^{\infty} \sum_{\gamma\in\Gamma} c_{4^k(1+4\gamma)}S_0^kS_1e_{\gamma}.\]
Now, let $n\in\N$.  Because $S_0$ and $S_1$ satisfy the Cuntz relations, $S_0^*S_0$ is the identity, and $S_0^*S_1 = 0$.  As a result, for $k < n$, 
\[ (S_0^*)^nS_0^kS_1  = 0.\]
Therefore
\begin{equation}\label{Eqn:n-k_1}
(S_0^*)^nf = \bigoplus_{k = n}^{\infty} \sum_{\gamma\in\Gamma} c_{4^k(1+4\gamma)}S_0^{k-n}S_1e_{\gamma}.
\end{equation}
and
\begin{equation}\label{Eqn:n-k_2}
\|(S_0^*)^nf\|^2 = \sum_{k = n}^{\infty} \sum_{\gamma\in\Gamma} |c_{4^k(1+4\gamma)}|^2.
\end{equation}
Since Equation \eqref{Eqn:n-k_2} is true for every $n$ and 
\begin{equation}
\bigcap_{k = 0}^{\infty} 4^k(1+4\Gamma) = \emptyset,
\end{equation}
we have
\begin{equation}
\lim_{n\rightarrow\infty}\|(S_0^*)^nf\| = 0.
\end{equation}
So, the requirement that $\|f\| = \|(S_0^*)^nf\|$ for all $n\in\N$ forces
\begin{equation}
\hs_{u} \cap \Bigl(L^2(\mu_{\frac14}) \ominus \textrm{span}\{e_0\}\Bigr) = \{0\}.
\end{equation}

On the other hand, if $f\in \textrm{span}\{e_0\}$, then $f = ce_0$ for some $c\in\C$, and $\|f\| = \|(S_0^n)^*f\|$ for all $n\in\N$ by Equation \eqref{Eqn:S0Adj}.  Therefore $f\in \hs_{u}$.

Now (a) and (b) follow from Proposition \ref{Prop:H_u}.
\end{proof}

We can draw a more general conclusion from Proposition \ref{Prop:WoldS_0}.  When $L^2(\mu)$ has a ONB $\{e_{\gamma} : \gamma\in\Gamma\}$ where $\Gamma$ is a countable infinite set, there is an isometric isomorphism between $L^2(\mu)$ and $\ell^2(\Gamma)$: if $f(t) = \sum_{\gamma} c_{\gamma}e_{\gamma}(t)$ in an $L^2(\mu_{\frac14})$-expansion, then
\begin{equation}
\|f\|^2_{L^2(\mu)} = \sum_{\gamma} |c_{\gamma}|^2 = \|c\|^2_{\ell^2(\Gamma)}.
\end{equation}
Here we use duality (Parseval).  Using the same proof technique as in Proposition \ref{Prop:WoldS_0}, we can establish the following lemma.
\begin{lemma}
Let $X$ be a countable infinite set, and let $\tau:X\rightarrow X$ be an injective but not surjective endomorphism such that
\begin{equation}
\bigcap_{n=1}^{\infty}\tau^n(X) = \{x_0\}
\end{equation}
where $\{x_0\}$ is a singleton.
Let $\xi = \xi(x)\in L^2(X)$, and let $S:\ell^2(X) \rightarrow \ell^2(X)$ and be defined by
\begin{equation}
(S\xi)(x) = \xi(\tau(x)).
\end{equation}
Then the unitary part of the Wold decompsition of $S:\ell^2(X)\rightarrow \ell^2(X)$ is one-dimensional; that is,
\begin{equation}
[\ell^2(X)]_u = \C\delta_{x_0}.
\end{equation}
\end{lemma}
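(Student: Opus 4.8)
The plan is to mirror the proof of Proposition \ref{Prop:WoldS_0} in the Hilbert space $\ell^2(X)$, with $\delta_{x_0}$ playing the role formerly played by $e_0$. The first point to pin down is that, because $\tau$ is injective rather than surjective, the operator $S$ defined by $(S\xi)(x)=\xi(\tau(x))$ is a \emph{co-isometry}, not an isometry: reindexing the sum by $y=\tau(x)$ gives $\|S\xi\|^2=\sum_{y\in\tau(X)}|\xi(y)|^2\le\|\xi\|^2$, and the same manipulation shows $SS^*=I$, i.e.\ $S^*$ is an isometry. So the Wold decomposition is applied to the isometry $S^*$, and the resulting splitting $\ell^2(X)=[\ell^2(X)]_u\oplus[\ell^2(X)]_s$ is simultaneously the decomposition of $S$ into its unitary part and a completely non-unitary (pure co-isometry) part; thus $[\ell^2(X)]_u$ is unambiguously ``the unitary part'' for either $S$ or $S^*$.

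Next I would invoke Proposition \ref{Prop:H_u}(2) for the isometry $A=S^*$; since $(A^n)^*=S^n$, this identifies
\[
[\ell^2(X)]_u=\bigl\{f\in\ell^2(X):\ \|S^nf\|=\|f\|\ \text{for all }n\in\N\bigr\}.
\]
The one computation that does the work, exactly parallel to \eqref{Eqn:n-k_2}, is that for every $n\ge1$,
\[
\|S^nf\|^2=\sum_{x\in X}|f(\tau^n(x))|^2=\sum_{y\in\tau^n(X)}|f(y)|^2=\|f\|^2-\!\!\sum_{y\in X\setminus\tau^n(X)}\!\!|f(y)|^2,
\]
where injectivity of $\tau$ (hence of each $\tau^n$) is used in the middle equality, and where $X\supseteq\tau(X)\supseteq\tau^2(X)\supseteq\cdots$ is the decreasing chain of images, so that $\|S^nf\|$ decreases monotonically in $n$ (as in Proposition \ref{Prop:H_u}(1)).

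To finish: the condition $\|S^nf\|=\|f\|$ for all $n$ is equivalent to $f$ vanishing on $X\setminus\tau^n(X)$ for each $n\ge1$, hence to $\operatorname{supp}(f)\subseteq\bigcap_{n\ge1}\tau^n(X)=\{x_0\}$, hence to $f\in\C\delta_{x_0}$; this gives $[\ell^2(X)]_u\subseteq\C\delta_{x_0}$. Conversely, $x_0\in\bigcap_{n\ge1}\tau^n(X)$ yields $\|S^n\delta_{x_0}\|^2=1$ for all $n$, so $\delta_{x_0}\in[\ell^2(X)]_u$; in fact the unique $\tau$-preimage of $x_0$ again lies in every $\tau^n(X)$, forcing $\tau(x_0)=x_0$, so $S\delta_{x_0}=\delta_{x_0}$ and $S$ (and $S^*$) restricts to the identity on $[\ell^2(X)]_u$, confirming that summand is unitary. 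Hence $[\ell^2(X)]_u=\C\delta_{x_0}$.

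The only place where care is genuinely needed is the bookkeeping flagged in the first paragraph: one must run the Wold decomposition on $S^*$ and remember that the characterization of the unitary part reappears in powers of $S$ itself. Everything after that is the exact abstraction of the facts $\bigcap_k 4^k\Gamma=\{0\}$ and $\bigcap_k 4^k(1+4\Gamma)=\emptyset$ used in Proposition \ref{Prop:WoldS_0}, now phrased for the nested images $\tau^n(X)$ shrinking to the singleton $\{x_0\}$.
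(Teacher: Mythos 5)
Your proof is correct and follows exactly the route the paper intends: the paper gives no proof of this lemma beyond the remark that it uses ``the same proof technique as in Proposition \ref{Prop:WoldS_0}'', and your argument is precisely that abstraction, replacing $\bigcap_k 4^k\Gamma(\tfrac14)=\{0\}$ with $\bigcap_n\tau^n(X)=\{x_0\}$. Your additional observation that $(S\xi)(x)=\xi(\tau(x))$ as literally written is a co-isometry rather than an isometry (so the Wold decomposition must be run on $S^*$) is a genuine and correctly handled point of care that the paper's statement glosses over.
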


\subsection{Induced measures on infinite words}\label{Sec:Induced}
Let $v\in L^2(\mu_{\frac14})$, and expand $v$ in terms of the ONB $E(\Gamma(\frac14))$:
\begin{equation}\label{Eqn:v}
v = \sum_{\gamma\in\Gamma(\frac14)} c_v(\gamma) e_{\gamma},
\end{equation}
in an $L^2(\mu_{\frac14})$-expansion, where
\begin{equation}\label{Eqn:Coef}
c_v(\gamma) = \langle e_{\gamma}, v\rangle_{L^2(\mu_{\frac14})}.
\end{equation}
If
\begin{equation}
\|v\|^2_{L^2(\mu_{\frac14})} = \int |v(x)|^2 \:d\mu_{\frac14}(x) = 1,
\end{equation}
then Equation \eqref{Eqn:Coef} corresponds to a probability distribution.  Recall the definition of $\Gamma(\frac14)$ from Equation \eqref{Eqn:Gamma}:
\[
 \Gamma\Bigl(\frac{1}{4}\Bigr) = 
\Biggl\{ 
\sum_{i=0}^m a_i 4^i \: : \: a_i\in \Bigl\{0,1 \Bigr\}, m \textrm{ finite}
\Biggr\},
\]
which is in one-to-one correspondence with all the finite words in the bits $0$ and $1$.  In other words, $(a_0, a_1, \ldots, a_N)$ represents a word of finite length in $0$s and $1$s of length $N+1$; the bit $a_N$ is the last bit in the representation of the word.

Now consider
\begin{equation}
X = \prod_{0}^{\infty} \{0,1\},
\end{equation}
which can be viewed as a Cantor set in the Tychonoff compact topology \cite{Jor06}.  Then $X$ is the set of all \textit{infinite} words in the bits $0$ and $1$; an element $\omega\in X$ can be denoted $\omega = (\omega_0, \omega_1, \omega_2, \ldots)$.

Let $\gamma\in \Gamma(\frac14)$, and make the natural association $\gamma \leftrightarrow (a_0, a_1, \ldots, a_N)$, where $\gamma = \sum_{i=0}^N a_i 4^i$.  Let $A(\gamma)$ denote the corresponding cylinder set in $X$:
\begin{equation}
A(\gamma) = \{ \omega \in X : \omega_i = a_i,\: 0 \leq i \leq N\}.
\end{equation}

We introduce one more piece of notation:  we concatenate two words in $\Gamma(\frac14)$ in the natural way.  If $\gamma \leftrightarrow (a_0, \ldots, a_N)$ and $\xi \leftrightarrow (b_0, \ldots, b_K)$, then
\[ 
\gamma\xi \leftrightarrow (a_0, \ldots, a_N, b_0, \ldots, b_K)
\]
corresponds to 
\[ \sum_{i = 0}^N a_i 4^i + \sum_{i = 0}^K b_i 4^{1+N+i}\in \Gamma\Bigl(\frac14\Bigr).\]
If $v$ is given as in \eqref{Eqn:v} and \eqref{Eqn:Coef}, then define
\begin{equation}\label{Eqn:PGamma}
P^v_{\Gamma}(A(\gamma)) = \sum_{\xi\in\Gamma(\frac14)} |c_v(\gamma\xi)|^2.
\end{equation}

Recall the following properties of cylinder sets (see also \cite{Jor06}):
\begin{enumerate}[(i)]
\item If $\emptyset$ denotes the empty word, then we set $A(\emptyset) = X$.
\item If $\gamma, \gamma'\in\Gamma(\frac14)$, then
\[ A(\gamma)\cap A(\gamma') \neq \emptyset \Leftrightarrow \gamma = \gamma'.\]
\item If there exists $\xi$ such that $\gamma' = \gamma\xi$, then
\[ A(\gamma') \subset A(\gamma).\]
\end{enumerate}

\begin{lemma}\label{Lemma:Borel}
For every $v\in L^2(\mu_{\frac14})$ with $\|v\| = 1$, then the assignment $P_{\Gamma}^v(\cdot)$ defined in Equation \eqref{Eqn:PGamma} extends to a probability measure defined on the $\sigma$-algebra of all Borel subsets in $X$.
\end{lemma}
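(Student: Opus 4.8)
The plan is to verify the hypotheses of the Kolmogorov extension theorem (equivalently, the Carathéodory–Hahn extension theorem applied to the algebra generated by cylinder sets). The central object is the set function $P^v_\Gamma$ defined on cylinder sets $A(\gamma)$ by Equation \eqref{Eqn:PGamma}. Since the cylinder sets generate the Borel $\sigma$-algebra on $X = \prod_0^\infty\{0,1\}$ and are closed under the operations recorded in properties (i)--(iii), it suffices to show that $P^v_\Gamma$ is finitely additive on cylinders, that $P^v_\Gamma(X) = 1$, and then to invoke compactness of $X$ to upgrade finite additivity to the countable additivity (equivalently, continuity from above at $\emptyset$) needed for the extension.

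First I would establish the normalization and the finite-additivity identity. For any word $\gamma \leftrightarrow (a_0,\dots,a_N)$, the cylinder $A(\gamma)$ splits as the disjoint union $A(\gamma 0)\sqcup A(\gamma 1)$, where $\gamma 0$ and $\gamma 1$ are the concatenations corresponding to $\gamma + 0\cdot 4^{N+1}$ and $\gamma + 1\cdot 4^{N+1}$. The key combinatorial fact, flowing from the one-to-one correspondence between $\Gamma(\frac14)$ and finite words and from the decomposition $\Gamma(\tfrac14) = 4\Gamma(\tfrac14)\sqcup(4\Gamma(\tfrac14)+1)$ noted in the proof of Lemma \ref{Lemma:Extend}, is that the index set $\{\gamma\xi : \xi\in\Gamma(\tfrac14)\}$ partitions as $\{\gamma 0\eta : \eta\in\Gamma(\tfrac14)\}\sqcup\{\gamma 1\eta : \eta\in\Gamma(\tfrac14)\}$; every suffix $\xi$ of $\gamma$ has a first bit which is either $0$ or $1$, and stripping it off gives a bijection with $\Gamma(\tfrac14)$ again. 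Summing $|c_v(\cdot)|^2$ over these index sets therefore gives
\begin{equation*}
P^v_\Gamma(A(\gamma)) = P^v_\Gamma(A(\gamma 0)) + P^v_\Gamma(A(\gamma 1)),
\end{equation*}
and iterating from $A(\emptyset) = X$ shows $P^v_\Gamma(X) = \sum_{\xi\in\Gamma(\frac14)} |c_v(\xi)|^2 = \|v\|^2 = 1$ by Parseval. Finite additivity on a general disjoint union of cylinders follows since any finite family of disjoint cylinders can be refined to a common ``level'' of words of equal length, where additivity is just the rearrangement of an absolutely convergent sum.

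The main obstacle — and the place where the topology of $X$ does real work — is passing from finite to countable additivity. Here I would use that $X$ is compact in the Tychonoff topology and that cylinder sets are clopen (hence compact). Given a decreasing sequence $A_1\supseteq A_2\supseteq\cdots$ of sets in the algebra with $\bigcap_n A_n = \emptyset$, compactness forces $A_n = \emptyset$ for some $n$ (a nested sequence of nonempty closed sets in a compact space has nonempty intersection), so continuity from above at $\emptyset$ holds trivially; combined with finite additivity this yields countable additivity on the algebra. Alternatively, one can argue directly: the tail estimate $\lim_{n\to\infty}\|(S_0^*)^n f\| = 0$ used in the proof of Proposition \ref{Prop:WoldS_0} is exactly the statement that $\sum_{|\gamma|\geq n}|c_v(\gamma)|^2\to 0$, which controls the mass of $P^v_\Gamma$ on cylinders of increasing depth and gives the required $\sigma$-additivity by hand. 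Either way, the Carathéodory extension theorem then produces a unique probability measure on the Borel $\sigma$-algebra of $X$ extending $P^v_\Gamma$, completing the proof. $\Box$
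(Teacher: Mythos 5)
Your argument is correct, but it is worth knowing that the paper does not actually prove Lemma \ref{Lemma:Borel} at all: its entire proof is ``See \cite{Jor06}.'' So you have supplied the standard Kolmogorov--Carath\'eodory extension argument that the authors delegate to a reference. The pieces you assemble are the right ones: the normalization $P^v_\Gamma(A(\emptyset))=\|v\|^2=1$ by Parseval; the consistency identity $P^v_\Gamma(A(\gamma))=P^v_\Gamma(A(\gamma 0))+P^v_\Gamma(A(\gamma 1))$, which does indeed reduce to the partition $\Gamma(\tfrac14)=4\Gamma(\tfrac14)\sqcup(4\Gamma(\tfrac14)+1)$ rescaled by $4^{N+1}$ (the index set $\gamma+4^{N+1}\Gamma$ splits as $(\gamma+4^{N+2}\Gamma)\sqcup(\gamma+4^{N+1}+4^{N+2}\Gamma)$, so the two sums of $|c_v|^2$ recombine exactly); and countable additivity on the cylinder algebra via compactness of the clopen cylinders, which forces any decreasing chain in the algebra with empty intersection to terminate at $\emptyset$. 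That is a complete and correct proof, and arguably more useful to a reader than the paper's citation. One caveat: your ``alternatively, one can argue directly'' aside is not a genuine substitute for the compactness step. The tail estimate $\|(S_0^*)^n f\|\to 0$ from Proposition \ref{Prop:WoldS_0} controls only the masses of the particular chain $A(\underbrace{0,\dots,0}_{n})$ (minus the $e_0$ component), and the masses of cylinders along a general decreasing chain need \emph{not} tend to zero --- the measures $P^{e_\gamma}_\Gamma$ are atomic, as the paper's own examples show --- so $\sigma$-additivity cannot be read off from that estimate alone. Since you present this only as an alternative and your main route stands on its own, the proof is fine; I would simply delete or soften that remark.
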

\begin{proof}See \cite{Jor06}.\end{proof}
\noindent\textbf{Remark:  }The role of the measures $P_{\Gamma}^v(\cdot)$ extends into our decomposition in Theorem \ref{Thm:Iterate} below.

\bigskip

\begin{proposition}
For every $v\in L^2(\mu_{\frac14})\backslash E(\Gamma(\frac14))$ with $\|v\| = 1$, the measure $P_{\Gamma}^v(\cdot)$ defined on $X$ is non-atomic---that is, $P_{\Gamma}^v(\{\omega\}) = 0$ for every $\omega \in X$.
\end{proposition}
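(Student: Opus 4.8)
The plan is to read off $P^v_\Gamma(\{\omega\})$ directly from the defining formula \eqref{Eqn:PGamma} for $P^v_\Gamma$ on cylinder sets, using only that $P^v_\Gamma$ is a genuine Borel probability measure on $X$ (Lemma~\ref{Lemma:Borel}) together with Parseval's identity $\sum_{\gamma\in\Gamma(\frac14)}|c_v(\gamma)|^2=\|v\|^2=1$ for the ONB $E(\Gamma(\frac14))$. Fix $\omega=(\omega_0,\omega_1,\dots)\in X$ and, for $N\ge 0$, let $\gamma_N=\sum_{i=0}^N\omega_i4^i\in\Gamma(\frac14)$ be the truncation of $\omega$ to a word of length $N+1$. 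By properties (ii)--(iii) of cylinder sets the sets $A(\gamma_N)$, read with this length, decrease and $\bigcap_{N\ge 0}A(\gamma_N)=\{\omega\}$, so continuity from above for the finite measure $P^v_\Gamma$ gives
\[
P^v_\Gamma(\{\omega\})=\lim_{N\to\infty}P^v_\Gamma\bigl(A(\gamma_N)\bigr)=\lim_{N\to\infty}\sum_{\xi\in\Gamma(\frac14)}\bigl|c_v(\gamma_N\xi)\bigr|^2 .
\]

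Next I would recognize each of these sums as a tail of the Parseval series. Since $\gamma_N$ has word-length $N+1$, concatenation gives $\gamma_N\xi=\gamma_N+4^{N+1}\xi$, so the index set $B_N:=\{\gamma_N\xi:\xi\in\Gamma(\frac14)\}$ equals $\{\gamma\in\Gamma(\frac14):\gamma\equiv\gamma_N\pmod{4^{N+1}}\}$, i.e.\ the $\gamma$'s whose first $N+1$ base-$4$ digits are $(\omega_0,\dots,\omega_N)$. These sets are nested, $B_0\supseteq B_1\supseteq\cdots$, and since $(|c_v(\gamma)|^2)_{\gamma\in\Gamma(\frac14)}$ is a summable family, the standard ``tail of an absolutely convergent series'' estimate (equivalently, dominated convergence for the counting measure on $\Gamma(\frac14)$) gives
\[
P^v_\Gamma(\{\omega\})=\sum_{\gamma\in\bigcap_N B_N}\bigl|c_v(\gamma)\bigr|^2 .
\]
A $\gamma\in\Gamma(\frac14)$ lies in every $B_N$ exactly when \emph{all} of its base-$4$ digits coincide with the corresponding $\omega_i$; since $\gamma$ has only finitely many nonzero digits, this forces $\omega$ to have only finitely many $1$'s, and in that case $\bigcap_N B_N=\{\gamma_\omega\}$, where $\gamma_\omega\in\Gamma(\frac14)$ is the integer whose finite word is $\omega$. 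Hence $\bigcap_N B_N=\emptyset$ and $P^v_\Gamma(\{\omega\})=0$ for every $\omega$ having infinitely many $1$'s, which already disposes of the topologically generic points of $X$, with no hypothesis on $v$ beyond $\|v\|=1$.

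The step I expect to be the real obstacle is the countable family of remaining points: those $\omega\in X$ that are zero-paddings of finite words, i.e.\ the copy of $\Gamma(\frac14)$ sitting inside $X$ via $\gamma\mapsto(a_0,\dots,a_N,0,0,\dots)$. For such an $\omega$ the computation above only yields $P^v_\Gamma(\{\omega\})=|c_v(\gamma_\omega)|^2=|\langle e_{\gamma_\omega},v\rangle|^2$, so to finish the proof one must show this coefficient vanishes, and it is precisely here that the hypothesis $v\notin E(\Gamma(\frac14))$ (together with $\|v\|=1$) has to enter in an essential way. The route I would try is to make the bookkeeping at this point fully explicit: using the splitting $\Gamma(\frac14)=4\Gamma(\frac14)\sqcup(4\Gamma(\frac14)+1)$ from Lemma~\ref{Lemma:Extend}, decompose $A(\gamma_\omega)$ as the disjoint union of $\{\omega\}$ and the ``longer'' cylinders obtained by inserting the first $1$ after position $N$, add up their $P^v_\Gamma$-masses --- being careful that the word~$\leftrightarrow$~integer correspondence is not injective once trailing zeros are allowed, so that cylinders must be tracked by their length and not merely by the integer $\gamma_N$ --- and then play the resulting identity against $\|v\|^2=1$. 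Converting that identity into the required statement about $\langle e_{\gamma_\omega},v\rangle$ is the delicate part of the argument, and I expect it to be where the hypothesis $v\notin E(\Gamma(\frac14))$ is decisive.
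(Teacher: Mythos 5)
Your treatment of the points $\omega$ with infinitely many $1$'s is correct, and it is essentially a rigorous version of the paper's own proof: the paper likewise passes to the truncations $\omega(n)$, uses continuity from above, and argues that the sums $\sum_{\xi}|c_v(\omega(n)\xi)|^2$ are nested ``tails'' of the summable family $(|c_v(\gamma)|^2)_{\gamma}$. But the case you flag as ``the real obstacle'' is a genuine gap, and it cannot be closed, because the proposition fails there. As you compute, the limit of the nested tail sums is $\sum_{\gamma\in\bigcap_N B_N}|c_v(\gamma)|^2$, and when $\omega=\gamma_{\omega}\,\underline{0}$ is the zero-padding of a finite word the intersection is the singleton $\{\gamma_{\omega}\}$, so
\[
P^v_{\Gamma}\bigl(\{\gamma_{\omega}\,\underline{0}\}\bigr)=|c_v(\gamma_{\omega})|^2=|\langle e_{\gamma_{\omega}},v\rangle|^2 .
\]
The hypothesis $v\notin E(\Gamma(\frac14))$ cannot force these coefficients to vanish. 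Concretely, take $v=\frac{1}{\sqrt{2}}(e_0+e_1)$, a unit vector that is not one of the exponentials; then for every $n$ the cylinder of words beginning with $n+1$ zeros has mass $\sum_{\gamma\in 4^{n+1}\Gamma(\frac14)}|c_v(\gamma)|^2=\frac12$, whence $P^v_{\Gamma}(\{\underline{0}\})=\frac12$. Worse, summing your atom formula over $\gamma$ gives total atomic mass $\sum_{\gamma}|c_v(\gamma)|^2=\|v\|^2=1$, so $P^v_{\Gamma}$ is the pushforward of the coefficient distribution under $\gamma\mapsto\gamma\,\underline{0}$ and is \emph{purely atomic} for every nonzero $v$. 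The route you sketch for the last step (decomposing $A(\gamma_{\omega})$ and playing the identity against $\|v\|^2=1$) therefore cannot succeed.

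For comparison, the paper's proof asserts that the tail sums tend to $0$ without observing that the index sets $B_N$ have nonempty intersection exactly when $\omega$ is eventually zero, and it never invokes the hypothesis $v\notin E(\Gamma(\frac14))$; it thus contains precisely the gap you isolated, and the subsequent example in the paper (atoms of $P^{e_{\gamma}}_{\Gamma}$ at $\gamma\,\underline{0}$) is the special case $v=e_{\gamma}$ of the formula above. What is actually true, and what your first case proves, is that $P^v_{\Gamma}$ has no atoms outside the countable set $\{\gamma\,\underline{0}:\gamma\in\Gamma(\frac14)\}$. Your diagnosis of where the difficulty lies is sound; the proof cannot be completed because the statement, as written, is false.
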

\begin{proof}
Let $v\in L^2(\mu_{\frac14})\backslash E(\Gamma(\frac14))$ with $\|v\| = 1$, and let $\omega\in X$.  For every $n\in \Z^+$, let $\omega(n)$ be the level $n$ truncation of $\omega$:
\begin{equation}\label{Eqn:Trunc}
\omega(n) = (\omega_0, \omega_1, \ldots, \omega_n).
\end{equation}

As before, let $A(\omega(n))$ denote the corresponding cylinder sets.  We have
\begin{equation}
A(\omega(1)) \supset A(\omega(2)) \supset \cdots \supset A(\omega(n)) \supset \cdots,
\end{equation}
and
\begin{equation}
\bigcap_{n=1}^{\infty} A(\omega(n)) = \{\omega\}.
\end{equation}	

By Lemma \ref{Lemma:Borel}, $P_{\Gamma}^v(\cdot)$ is a Borel measure on $X$, so it follows that
\begin{equation}
 \lim_{n\rightarrow\infty} P^v_{\Gamma}(A(\omega(n))) =  P^v_{\Gamma}(\{\omega\}).
\end{equation}
We claim that the limit is $0$.  
Fix $n\in\N$.  By the definition in Equation \eqref{Eqn:PGamma},
\begin{equation}
P_{\Gamma}^v(A(\omega(n)) = \sum_{\xi\in\Gamma(\frac14)} |\,c_v(\omega(n)\xi)\,|^2.
\end{equation}
Now compare the relative roles of $\omega(n)$ and $\omega(n+1)$.  All the terms in the sum
\[\sum_{\xi\in\Gamma(\frac14)} |\,c_v(\omega(n+1)\xi)\,|^2\]
are contained in the sum
\[\sum_{\xi\in\Gamma(\frac14)} |\,c_v(\omega(n)\xi)\,|^2,\] 
(but not vice versa).  

Now let $N\gg n$.   The sum
\[\sum_{\xi\in\Gamma(\frac14)} |\,c_v(\omega(N)\xi)\,|^2\]
is in the ``tail'' of
 \[\sum_{\xi\in\Gamma(\frac14)} |\,c_v(\omega(n)\xi)\,|^2,\]
 and  $\{ c_v(\omega(n)\xi)\}_{\xi\in\Gamma}\in \ell^2(\Gamma(\frac14))$.  Therefore the limit as $n\rightarrow\infty$ is $0$.
\end{proof}

\begin{example}Some atomic measures.\end{example}
\noindent Consider the case $v = e_0$ and the element $\omega = $an infinite string of $0$s.      Then for any $n\in\N$,
\begin{equation}
\begin{split}
P_{\Gamma}^{e_0}(A(\omega(n))
& = \sum_{\xi\in\Gamma(\frac14)} |\,c_{e_0}(\omega(n)\xi)\,|^2\\
& =  |\,c_{e_0}(\omega(n)0)\,|^2 +  \sum_{\xi \in \Gamma(\frac14)\backslash\{0\}}  |\,c_{e_0}(\omega(n)\xi)\,|^2 \\
& = | \langle e_0, e_0\rangle|^2 +  \sum_{\xi \in \Gamma(\frac14)\backslash\{0\}}  |\langle e_0, e_{\omega(n)\xi}\rangle |^2=1.\\
\end{split}
\end{equation}

Denote by $\omega = \underline{0}$ the infinite string of $0$s in $X$.  Fix $\gamma\in\Gamma(\frac14)$.  Then the measure $P^{e_\gamma}_{\Gamma}$ is atomic, and its atom is given by $(\gamma\, \underline{0})\in X$.  The reasoning is the same as above, except we consider the cylinder sets corresponding to $(\gamma\, \underline{0})$, which we denote $A(\gamma\omega(n))$:
\begin{equation}
\begin{split}
P_{\Gamma}^{e_{\gamma}}(A(\gamma\omega(n)) 
 & =  |\,c_{e_{\gamma}}(\gamma\omega(n)0)\,|^2 +  \sum_{\xi \in \Gamma(\frac14)\backslash\{0\}}  |\,c_{e_{\gamma}}(\gamma\omega(n)\xi)\,|^2 \\
& = | \langle e_{\gamma}, e_{\gamma}\rangle|^2 +  \sum_{\xi \in \Gamma(\frac14)\backslash\{0\}}  |\langle e_{\gamma}, e_{\gamma\omega(n)\xi}\rangle |^2=1.\\
\end{split}
\end{equation}

\hfill $\Diamond$
\section{Measure decompositions}\label{Sec:Decomp}

Below we prove some lemmas on particular representations of $C^*$-algebras. They will be needed later.  In Section \ref{Sec:O_2}, we employ two $C^*$-homomorphisms to find a decomposition of an operator $X$ which commutes with both $S_0$ and $S_0^*$.  The resulting Equation \eqref{Eqn:Id1} tells us that any such $X$ can be written as
\[ X = S_0 X S_0^* +  S_1 S_1^* X S_1S_1^*.\] 
The operators $U$ and $\phi(U)$ ($\phi$ is a Borel function on $\T$) are two such operators which commute with $S_0$ and $S_0^*$, and our applications will be to these operators.
Then, in Section \ref{Subsec:pvms}, we decompose the projection-valued measure of $\phi(U)$ to derive Equation \eqref{Eqn:MeasDecomp}.  Finally, in Section \ref{Subsec:SpecMeas}, we explore the spectral real-valued measures associated with $U$ arising from our decomposition \eqref{Eqn:Id1}.  The resulting Proposition \ref{Prop:RealDecomp} allows us to write a real-valued spectral measure $m_v^U$ in terms of two pieces associated with $S_0$ and $S_1$. Finally, we provide several examples to show how the spectral real-valued measures can be computed.

\subsection{Two $C^*$-homomorphisms}\label{Sec:O_2}

Because the isometries $\{S_0, S_1\}$ form a representation of $\mathcal{O}_2$ on $L^2(\mu_{\frac{1}{4}})$, we know that the isometries $S_0$ and $S_1$ have orthogonal ranges and that the idenitity operator can be written as a sum of range projections:
\[
I = S_0S_0^* + S_1S_1^*.
\]

If $\hs$ is a Hilbert space, we denote by $B(\hs)$ the algebra of bounded operators on $\hs$.  We also consider $2\times 2$ matrices of bounded operators on $L^2(\mu_{\frac{1}{4}})$, which we denote by $M_{2\times 2}(B(L^2(\mu_{\frac{1}{4}})))$.

\begin{definition}\label{Defn:alphabeta}
Given the Hilbert space $L^2(\mu_{\frac{1}{4}})$ and the representation $\{S_0, S_1\}$ of $\mathcal{O}_2$ on $L^2(\mu_{\frac{1}{4}})$, we define the following maps:
\begin{enumerate}[(a)]
\item the map $\alpha_2:B(L^2(\mu_{\frac{1}{4}}))) \rightarrow M_{2\times 2}(B(L^2(\mu_{\frac{1}{4}})))$ by  
\begin{equation}\label{Eqn:alpha}
(\alpha_2(X))_{i,j} = S_i^*XS_j.
\end{equation}
\item the map $\beta_2: M_{2\times 2}(B(L^2(\mu_{\frac{1}{4}})))\rightarrow B(L^2(\mu_{\frac{1}{4}}))$ by 
\begin{equation}\label{Eqn:beta}
\beta_2( (M_{i,j})) = \sum_{i = 0}^1 \sum_{j = 0}^1 S_i M_{i,j} S_j^*.
\end{equation}
\end{enumerate}
\end{definition}

We include the following lemma for completeness. 
\begin{lemma}
The maps $\alpha_2$ and $\beta_2$ in Definition \ref{Defn:alphabeta} are unital, additive, and multiplicative.  In addition, both $\alpha_2$ and $\beta_2$ respect the involutions on $B(L^2(\mu_{\frac{1}{4}}))$ and $M_{2\times 2}(B(L^2(\mu_{\frac{1}{4}})))$. 
\end{lemma}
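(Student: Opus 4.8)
The plan is to verify each claimed property of $\alpha_2$ and $\beta_2$ directly from the defining formulas \eqref{Eqn:alpha} and \eqref{Eqn:beta}, using only the Cuntz relations $S_i^*S_j = \delta_{i,j}I$ and $\sum_{i=0}^1 S_iS_i^* = I$. All of this is routine bookkeeping with $2\times 2$ matrices whose entries are operators, so the ``proof'' is really a sequence of short computations; the only thing to be careful about is the order of multiplication, since $B(L^2(\mu_{\frac14}))$ is noncommutative, and the fact that matrix multiplication in $M_{2\times 2}(B(L^2(\mu_{\frac14})))$ must be taken with entries multiplied on the correct side.

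First I would handle $\alpha_2$. Unitality: $(\alpha_2(I))_{i,j} = S_i^*S_j = \delta_{i,j}I$, which is the identity of $M_{2\times 2}(B(L^2(\mu_{\frac14})))$. Additivity is immediate from linearity of $X \mapsto S_i^*XS_j$. For multiplicativity I would insert the resolution of the identity: $(\alpha_2(XY))_{i,j} = S_i^*XYS_j = S_i^*X\bigl(\sum_{k=0}^1 S_kS_k^*\bigr)YS_j = \sum_{k=0}^1 (S_i^*XS_k)(S_k^*YS_j) = \sum_{k=0}^1 (\alpha_2(X))_{i,k}(\alpha_2(Y))_{k,j}$, which is exactly the $(i,j)$ entry of $\alpha_2(X)\alpha_2(Y)$. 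For the $*$-property, the involution on $M_{2\times 2}$ is $((M_{i,j})^*)_{i,j} = (M_{j,i})^*$, so $(\alpha_2(X^*))_{i,j} = S_i^*X^*S_j = (S_j^*XS_i)^* = ((\alpha_2(X))_{j,i})^* = ((\alpha_2(X))^*)_{i,j}$.

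Next I would do $\beta_2$. Unitality: $\beta_2(\text{identity matrix}) = \sum_{i,j} S_i(\delta_{i,j}I)S_j^* = \sum_{i=0}^1 S_iS_i^* = I$ by the Cuntz relation. Additivity is again clear. For multiplicativity, compute $\beta_2((M_{i,j}))\beta_2((N_{k,l})) = \bigl(\sum_{i,j} S_iM_{i,j}S_j^*\bigr)\bigl(\sum_{k,l} S_kN_{k,l}S_l^*\bigr) = \sum_{i,j,k,l} S_iM_{i,j}(S_j^*S_k)N_{k,l}S_l^* = \sum_{i,j,l} S_iM_{i,j}N_{j,l}S_l^*$ using $S_j^*S_k = \delta_{j,k}I$, and this equals $\beta_2\bigl((\sum_j M_{i,j}N_{j,l})_{i,l}\bigr) = \beta_2((M_{i,j})(N_{k,l}))$. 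For the involution, $\beta_2((M_{i,j}))^* = \sum_{i,j}(S_iM_{i,j}S_j^*)^* = \sum_{i,j} S_jM_{i,j}^*S_i^*$, and relabeling $i \leftrightarrow j$ gives $\sum_{i,j}S_iM_{j,i}^*S_j^* = \beta_2\bigl((M_{j,i}^*)_{i,j}\bigr) = \beta_2((M_{i,j})^*)$.

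There is no real obstacle here; the one point that deserves a sentence of care is the placement of the Cuntz relations in the multiplicativity arguments, since for $\alpha_2$ one inserts $\sum_k S_kS_k^* = I$ in the \emph{middle} of the product $XY$, whereas for $\beta_2$ one uses the orthogonality relation $S_j^*S_k = \delta_{j,k}I$ that arises automatically when the two sums are multiplied out. I would present the computation for $\alpha_2$ in full and remark that $\beta_2$ is verified by the analogous (dual) computation, or include both if space permits.
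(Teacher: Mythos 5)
Your proof is correct and follows essentially the same route as the paper's: direct entrywise verification from the defining formulas, inserting $\sum_k S_kS_k^* = I$ in the middle of $XY$ for the multiplicativity of $\alpha_2$ and using $S_j^*S_k = \delta_{j,k}I$ to collapse the quadruple sum for $\beta_2$. If anything, your bookkeeping of the indices in the $\beta_2$ multiplicativity and involution steps is slightly more careful than the paper's.
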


\begin{proof}
Let $I$ be the identity on $B(L^2(\mu_{\frac{1}{4}}))$; then $(\delta_{i,j}I)_{i,j}$ is the identity on $M_{2\times 2}(B(L^2(\mu_{\frac{1}{4}})))$.  Fix $(i,j)\in \{0,1\}\times\{0,1\}$.  First,
\begin{equation*}
(\alpha_2(I))_{i,j}
= S_i^*S_j 
= (\delta_{i,j}I)_{i,j},
\end{equation*}
so $\alpha_2$ is unital.   Second, given $X,Y \in B(L^2(\mu_{\frac{1}{4}}))$, then
\begin{equation*}
\begin{split}
(\alpha_2(X+Y))_{i,j} 
& = S_i^*(X+Y)S_j
= S_i^*XS_j + S_i^*YS_j \\
& = (\alpha_2(X))_{i,j} + (\alpha_2(Y))_{i,j}.
\end{split}
\end{equation*}
Third, 
\begin{equation*}
\begin{split}
(\alpha_2(X)\alpha_2(Y))_{i,j}
& = \sum_{k=0}^1 (\alpha_2(X))_{i,k} (\alpha_2(Y))_{k.j}\\
& = \sum_{k=0}^1 S_i^*XS_k S_k^*YS_j.
\end{split}
\end{equation*}
Since $i$ and $j$ are fixed, we can move the sum into the middle of the expression and use the Cuntz property that $\sum_{k=0}^1 S_k S_k^* = I$ to see that 
\begin{equation*}
\sum_{k=0}^1 S_i^*XS_k S_k^*YS_j = (\alpha_2(XY))_{i,j}.
\end{equation*}

Finally, if $X\in B(L^2(\mu_{\frac{1}{4}}))$, then
\begin{equation*}
(\alpha_2(X^*))_{i,j}  
= S_i^*X^*S_j 
= (S_j^* X S_i)^* 
= ((\alpha_2(X))_{j,i})^*
\end{equation*}
where the ``$*$'' in $M_{2\times 2}(B(L^2(\mu_{\frac{1}{4}})))$ has the usual matrix meaning.

We now consider $\beta_2$.  The relation $\sum_i\sum_j S_iS_j^* = I$ makes $\beta_2$ a unital map.

Let $\widetilde{X}, \widetilde{Y} \in M_{2\times 2}(B(L^2(\mu_{\frac{1}{4}})))$. By distributing $S_i$ and $S_j^*$ on both sides of
\[ \sum_{i=0}^i \sum_{j=0}^1 S_i(\widetilde{X}_{i,j} + \widetilde{Y}_{i,j})S_j^*,\]
we can see that
\[ \beta_2(\widetilde{X} + \widetilde{Y}) = \beta_2(\widetilde{X}) + \beta_2(\widetilde{Y}).\]
For the product, 
\[
\beta_2(\widetilde{X} \widetilde{Y}) 
=  \sum_{i=0}^1 \sum_{j=0}^1 S_i \Bigl( \sum_{k=0}^1\widetilde{X}_{i,k} \widetilde{Y}_{i,k} \Bigr) S_j^*.
\]
On the other hand,
\[
\beta_2(\widetilde{X})\beta_2 (\widetilde{Y})
 = \sum_{i, j, k, \ell = 0}^1 S_i \widetilde{X}_{i,k} S_k^* S_{\ell} \widetilde{Y}_{\ell, j} S_{j}^*,
\]
and $S_k^* S_{\ell} = \delta_{k, \ell}I$.  Therefore we can remove the $\ell$ index from the sum:
\[
\beta_2(\widetilde{X})\beta_2 (\widetilde{Y})
 = \sum_{i, j, k = 0}^1 S_i \widetilde{X}_{i,k} \widetilde{Y}_{k,j} S_{j}^*
 = \beta_2(\widetilde{X} \widetilde{Y}) .
\]

Finally, if $\widetilde{X} = (\widetilde{X})_{i,j}$, then $\widetilde{X}^* = (\widetilde{X}_{j,i}^*)$.
\[ 
\beta_2(\widetilde{X}^*) 
= \sum_{i=0}^1 \sum_{j=0}^1 S_i \widetilde{X}^*_{j,i} S_j,
\]
and
\[ 
[ \beta_2(\widetilde{X}) ]^*
= \Bigl( \sum_{i=0}^1 \sum_{j=0}^1 S_i \widetilde{X}_{i,j} S_j^*\Bigr)^*
= \sum_{i=0}^1 \sum_{j=0}^1 S_j \widetilde{X}^*_{i,j} S_i^*
.\] 
\end{proof}

\begin{lemma}\label{Lemma:Id}
Let $\alpha_2$ and $\beta_2$ be the maps defined in Definition \ref{Defn:alphabeta}.
Then 
\begin{equation}\label{Eqn:Id}
\beta_2\circ\alpha_2 = I.
\end{equation}
\end{lemma}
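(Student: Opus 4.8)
The plan is to compute $\beta_2 \circ \alpha_2$ directly on an arbitrary operator $X \in B(L^2(\mu_{\frac14}))$ using the definitions in \eqref{Eqn:alpha} and \eqref{Eqn:beta}, and then invoke the Cuntz relations for $\{S_0, S_1\}$. Unwinding the definitions, $(\alpha_2(X))_{i,j} = S_i^* X S_j$, so that
\[
(\beta_2 \circ \alpha_2)(X) = \beta_2\bigl( (S_i^* X S_j)_{i,j} \bigr) = \sum_{i=0}^1 \sum_{j=0}^1 S_i \, (S_i^* X S_j) \, S_j^* = \sum_{i=0}^1 \sum_{j=0}^1 S_i S_i^* \, X \, S_j S_j^*.
\]

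First I would factor the double sum as a product of two single sums, writing $\sum_{i,j} S_iS_i^* X S_jS_j^* = \bigl(\sum_{i=0}^1 S_i S_i^*\bigr) X \bigl(\sum_{j=0}^1 S_j S_j^*\bigr)$. Then I would apply the Cuntz identity $\sum_{i=0}^1 S_i S_i^* = I$ (noted just above Theorem \ref{Thm:GenRel} and recalled at the start of Section \ref{Sec:O_2}) to each factor, yielding $I \cdot X \cdot I = X$. Since $X$ was arbitrary, this establishes $\beta_2 \circ \alpha_2 = I$ as maps on $B(L^2(\mu_{\frac14}))$.

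There is essentially no obstacle here: the only subtlety worth a sentence is that the factorization step requires that $S_i S_i^*$ and $S_j S_j^*$ be inserted on the correct sides of $X$, which is automatic from the definition of $\beta_2$, and that the interchange of the finite double sum with multiplication by the fixed operator $X$ is valid since everything is a finite sum of bounded operators. One could alternatively note that the previous lemma already shows $\alpha_2$ and $\beta_2$ are unital $*$-homomorphisms, so it suffices to check the composition on generators or simply to observe that $\beta_2 \circ \alpha_2$ is a unital homomorphism agreeing with the identity; but the one-line direct computation above is cleaner and self-contained. (The reverse composition $\alpha_2 \circ \beta_2$ is \emph{not} the identity — it is a conditional-expectation-type projection onto a corner of the matrix algebra — which is consistent with $\alpha_2$ being an isometric embedding and $\beta_2$ a left inverse, so I would not claim more than the stated one-sided identity.)
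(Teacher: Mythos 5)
Your proof is correct and is essentially the paper's own argument: both expand $\beta_2(\alpha_2(X))$ into the four terms $S_iS_i^* X S_jS_j^*$ and apply the Cuntz completeness relation $S_0S_0^*+S_1S_1^*=I$ twice to collapse the sum to $X$. The only cosmetic difference is that you factor the double sum as a product of two single sums while the paper groups terms and simplifies in two stages.
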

\begin{proof}
Let $X\in B(L^2(\mu_{\frac{1}{4}}))$.  We use the equation $S_0S_0^* + S_1S_1^* = I$ twice:
\begin{equation}\label{Eqn:Id}
\begin{split}
\beta_2(\alpha_2(X)) 
& = S_0 S_0^*X S_0 S_0^* + S_0 S_0^*X S_1 S_1^* \\
& \phantom{==} + S_1 S_1^* X S_0S_0^* +  S_1 S_1^* X S_1 S_1^*\\
& = S_0 S_0^* (X S_0 S_0^* + X S_1 S_1^*)\\
& \phantom{==} + S_1 S_1^* (X S_0 S_0^* + X S_1 S_1^*)\\
& = X S_0 S_0^* + X S_1 S_1^* = X.
\end{split}
\end{equation}
\end{proof}

If we assume that $X$ commutes with $S_0$ and $S_0^*$, then we can derive the following identity from Equation \eqref{Eqn:Id}:
\begin{equation}\label{Eqn:Id1}
\begin{split}
X 
& = S_0 \underbrace{S_0^*X S_0}_{X} S_0^* + S_0 \underbrace{S_0^*X S_1}_{0} S_1^* \\
& \phantom{==} + S_1 \underbrace{S_1^* X S_0}_{0} S_0^* +  S_1 S_1^* X S_1S_1^*\\
& = S_0 X S_0^* +  S_1 S_1^* X S_1S_1^*\\
\end{split}
\end{equation}

\subsection{The projection-valued measure for $U$}\label{Subsec:pvms}
We will now specialize to the case $X = \phi(U)$ where $\phi$ is a Borel function on $\T$ and $U$ is the operator which enacts scaling by $5$ on the canonical spectrum for $L^2(\mu_{\frac{1}{4}})$:
\[ Ue_{\gamma} = e_{5\gamma}.\]
As we saw earlier, $U$ commutes with $S_0$ and therefore with $S_0^*$ by Fuglede's theorem.  Both $U$ and $U^*$ commute with the range projection $S_1S_1^*$.  As a result, $\phi(U)$ commutes with $S_1S_1^*$.

\begin{lemma}\label{Lemma:S_1U^k}Let $U$, $S_0$, $S_1$, and $M_{e_1}$ be defined on $L^2(\mu_{\frac14})$ as above.  Then for each $k\in\N$,
\begin{equation}\label{Eqn:S_1U^k}
S_1^* U^k S_1 = (S_1^* U S_1)^k = (M_{e_1}U)^k
\end{equation}
and
\begin{equation}\label{Eqn:S_1U^*k}
S_1^* U^{-k} S_1 = (S_1^* U S_1)^{-k}.
\end{equation}
\end{lemma}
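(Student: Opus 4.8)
The plan is to prove both identities by induction on $k$, using the two structural facts already available: the relation $S_1^* U S_1 = M_{e_1}U$ from \eqref{Eqn:Rel}, and the fact that $U$ commutes with $S_0$ and $S_0^*$ (Lemma \ref{Lemma:S0UUS0}), which together with the Cuntz relation $S_0S_0^* + S_1S_1^* = I$ forces $U$ (hence every power $U^k$ and $U^{-k}$) to commute with the range projection $S_1S_1^*$. The base case $k=1$ of \eqref{Eqn:S_1U^k} is exactly \eqref{Eqn:Rel}. For the inductive step, I would write $U^{k+1} = U^k \cdot U$ and insert a resolution of the identity between the two factors:
\begin{equation*}
S_1^* U^{k+1} S_1 = S_1^* U^k (S_0 S_0^* + S_1 S_1^*) U S_1 = S_1^* U^k S_0 S_0^* U S_1 + S_1^* U^k S_1 S_1^* U S_1.
\end{equation*}
The first term vanishes: since $U^k$ commutes with $S_0$, we have $S_1^* U^k S_0 = S_1^* S_0 U^k = 0$ by orthogonality of the ranges. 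The second term factors as $(S_1^* U^k S_1)(S_1^* U S_1)$, which by the inductive hypothesis and the base case equals $(M_{e_1}U)^k (M_{e_1}U) = (M_{e_1}U)^{k+1}$, completing the induction for \eqref{Eqn:S_1U^k}.

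For \eqref{Eqn:S_1U^*k}, the argument is parallel but requires a little care since $M_{e_1}U$ is not obviously invertible as written; the cleanest route is to first establish the key identity $S_1^* U^{-1} S_1 = (S_1^* U S_1)^{-1}$ directly, and then induct. To get that, note $S_1^* U S_1$ is a unitary on $L^2(\mu_{\frac14})$ (being the compression of the unitary $U$ to the range of $S_1$, where it already equals the product $M_{e_1}U$ of two unitaries), so it has a genuine inverse; then compute
\begin{equation*}
(S_1^* U S_1)(S_1^* U^{-1} S_1) = S_1^* U (S_0 S_0^* + S_1 S_1^*) U^{-1} S_1 = S_1^* U S_0 S_0^* U^{-1} S_1 + S_1^* U S_1 S_1^* U^{-1} S_1,
\end{equation*}
and again the first term dies because $U$ commutes with $S_0$ while $S_1^* S_0 = 0$ — more precisely $S_1^* U S_0 = S_1^* S_0 U = 0$ — leaving $S_1^* U S_1 S_1^* U^{-1} S_1$; but $S_1 S_1^*$ commutes with $U^{-1}$, so this is $S_1^* U U^{-1} S_1 S_1^* S_1 = S_1^* S_1 = I$. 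Hence $S_1^* U^{-1} S_1 = (S_1^* U S_1)^{-1}$. The general negative power then follows by the same ``insert $S_0S_0^* + S_1S_1^*$, kill the cross term, peel off one factor'' induction applied to $U^{-(k+1)} = U^{-k} \cdot U^{-1}$.

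I do not expect a serious obstacle here — the whole proof is an assembly of the Cuntz relations, the commutation $[U, S_0] = [U, S_0^*] = 0$, and \eqref{Eqn:Rel}. The one point deserving attention is making sure the projection-absorption steps ($S_1 S_1^*$ commuting with $U^{\pm k}$, and $S_1^* U^k S_0 = 0$) are invoked with the correct power of $U$ at each stage; this is where Lemma \ref{Lemma:S0UUS0} and Fuglede's theorem do the real work, since without commutation of $U$ with $S_0^*$ the cross terms would not vanish. A remark worth including is that \eqref{Eqn:S_1U^k} and \eqref{Eqn:S_1U^*k} together say the compression map $X \mapsto S_1^* X S_1$ is multiplicative on the (abelian) von Neumann algebra generated by $U$, which is exactly what one needs to pass from $U$ to $\phi(U)$ in the subsequent decomposition of the projection-valued measure.
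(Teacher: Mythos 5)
Your proof is correct and follows essentially the same route as the paper's: induction on $k$ with the resolution of the identity $S_0S_0^*+S_1S_1^*=I$ inserted between the factors, the cross term killed by $[U,S_0]=[U,S_0^*]=0$ together with $S_1^*S_0=S_0^*S_1=0$, and the base case supplied by $S_1^*US_1=M_{e_1}U$. The only (immaterial) difference is in the negative powers, where the paper simply takes adjoints --- $S_1^*U^{-1}S_1=(S_1^*US_1)^*=(S_1^*US_1)^{-1}$ since $M_{e_1}U$ is unitary --- while you verify the inverse identity by a direct computation.
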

\begin{proof}
The second equality in Equation \eqref{Eqn:S_1U^k} follows from \cite[Theorem 4.10]{JKS11}.

For the first equality in \eqref{Eqn:S_1U^k}:  when $k = 1$, there is nothing to check.  Suppose the first equality is true for $k = n$.  Then $S_1^* U^n S_1 = (S_1^* U S_1)^n$.  Now compute $S_1^* U^{n+1} S_1$:
\begin{equation}
\begin{split}
S_1^* U^{n+1} S_1
& = S_1^* U^n U S_1
   = S_1^* U^n (S_0S_0^* + S_1 S_1^*)U S_1\\
& = S_1^* U^n S_0S_0^*U S_1 + (S_1^* U^n S_1)(S_1^*U S_1)\\
\end{split}
\end{equation}
Since $U$ and $S_0^*$ commute and $S_0^* S_1 = 0$, the first term in the sum is $0$.  Therefore only the last term remains, and by the induction hypothesis, 
\begin{equation}
(S_1^* U^n S_1)(S_1^*U S_1) = (S_1^* U S_1)^n(S_1^*U S_1) = (S_1^* U S_1)^{n+1}.
\end{equation}

We now turn to Equation \eqref{Eqn:S_1U^*k}.  Because $S_1^*US_1 = M_{e_1}U$, $(S_1^*US_1)^{-1}$ exists and
\[(S_1^*US_1)^{-1} = (S_1^*US_1)^*.\] 
Suppose $k = 1$:
\begin{equation}
(S_1^*US_1)^* = S_1^* U^* (S_1^*)^* = S_1^* U^{-1} S_1.
\end{equation}

For $k > 1$, we use the same induction as before to establish Equation \eqref{Eqn:S_1U^*k}.
\end{proof}

Again, let $\phi:\T\rightarrow \C$ be a Borel function.  If we approximate $\phi(U)$ by finite Laurent series in $U$ and apply  Lemma \ref{Lemma:S_1U^k}, we have
\begin{equation}\label{Eqn:S_1phi}
S_1^* \phi(U) S_1 = \phi(M_{e_1}U).
\end{equation}
No commutation relations are required for \eqref{Eqn:S_1phi}.  Since $\phi(U)$ commutes with $S_0$ and $S_0^*$, Equation \eqref{Eqn:Id1} becomes
\begin{equation}\label{Eqn:Id2}
\phi(U) = S_0 \phi(U) S_0^* + S_1 \phi(M_{e_1}U) S_1^*.
\end{equation}

Let $P^U$ and $P^{M_{e_1}U}$ be the projection-valued measures (pvms) for $U$ and $M_{e_1} U$ respectively.  The pvms are defined by the spectral theorem, which says that
there exists a pvm $P^U$\! such that the unitary operator $U$ can be written as an integral against that pvm: 
\[ U = \int_{\sigma(U)} z\: P^U\!(dz).\]
Here, $\sigma(U)$ is the spectrum of $U$.  See Lemma \ref{Thm:DSX28} for more results associated with the spectral theorem.
 
\begin{proposition}\label{Prop:PVMDecomp}
The projection-valued measure of $U$ has the following decomposition:
\begin{equation}\label{Eqn:MeasDecomp}
P^{U}\!(A) = S_0 P^{U}\!(A) S_0^* + S_1 P^{M_{e_1}U}\!(A) S_1^*.
\end{equation}
\end{proposition}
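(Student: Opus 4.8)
The plan is to derive the projection-valued measure decomposition from the already-established functional-equation \eqref{Eqn:Id2}, namely $\phi(U) = S_0 \phi(U) S_0^* + S_1 \phi(M_{e_1}U) S_1^*$ for every bounded Borel function $\phi$ on $\T$. The idea is that \eqref{Eqn:MeasDecomp} is simply \eqref{Eqn:Id2} applied to the indicator function $\phi = \mathbf{1}_A$ for a Borel set $A \subseteq \T$, since $P^U(A) = \mathbf{1}_A(U)$ and $P^{M_{e_1}U}(A) = \mathbf{1}_A(M_{e_1}U)$ by the Borel functional calculus furnished by the spectral theorem. Thus the only real content is to justify that \eqref{Eqn:Id2}, which was obtained by approximating $\phi(U)$ by finite Laurent polynomials in $U$ (hence a priori for continuous, or at least Riemann-integrable, $\phi$), persists for $\phi = \mathbf{1}_A$.

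First I would recall from \eqref{Eqn:S_1phi} that $S_1^*\phi(U)S_1 = \phi(M_{e_1}U)$ holds for all bounded Borel $\phi$: this follows because $S_1^*U^k S_1 = (M_{e_1}U)^k$ for all $k \in \Z$ by Lemma \ref{Lemma:S_1U^k}, so $S_1^* p(U) S_1 = p(M_{e_1}U)$ for Laurent polynomials $p$, and then a standard bounded-convergence argument in the strong operator topology extends this to all bounded Borel $\phi$ — one takes a uniformly bounded sequence of trigonometric polynomials converging pointwise $\mu$-almost everywhere (with respect to the scalar spectral measures of both $U$ and $M_{e_1}U$) to $\phi$, and uses that $X \mapsto SXS_1^*$ etc.\ are SOT-continuous on bounded sets. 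Then I would invoke Lemma \ref{Lemma:Id} and Equation \eqref{Eqn:Id1}: since $\phi(U)$ commutes with $S_0$ and with $S_0^*$ (because $U$ commutes with $S_0$, hence with $S_0^*$ by Fuglede, hence $\phi(U)$ does too, being a SOT-limit of polynomials in $U$), Equation \eqref{Eqn:Id1} gives $\phi(U) = S_0\phi(U)S_0^* + S_1 S_1^*\phi(U) S_1 S_1^*$, and substituting \eqref{Eqn:S_1phi} into the second term yields exactly \eqref{Eqn:Id2}. Finally, specializing $\phi = \mathbf{1}_A$ gives \eqref{Eqn:MeasDecomp}.

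The main obstacle — really the only subtlety — is making the passage from Laurent polynomials to arbitrary bounded Borel $\phi$ rigorous, i.e.\ confirming that both sides of \eqref{Eqn:Id2} are SOT-continuous (or weakly continuous) under uniformly bounded pointwise-a.e.\ limits. The cleanest way is to note that all three operators $U$, $M_{e_1}U$, and the compressions involved have well-defined Borel functional calculi, that the maps $\phi \mapsto \phi(U)$ and $\phi \mapsto \phi(M_{e_1}U)$ are each bounded-Borel-functional-calculus homomorphisms, and that the collection of $\phi$ for which \eqref{Eqn:Id2} holds is a bounded-convergence-closed algebra containing the trigonometric polynomials; by the functional form of the monotone class theorem it is all of the bounded Borel functions. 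I would also remark that one can check commutation of $\phi(U)$ with $S_0, S_0^*$ directly at the level of spectral projections, since $P^U(A)$ lies in the von Neumann algebra generated by $U$ and $U^*$, which contains $S_0$ by Lemma \ref{Lemma:S0UUS0} being a statement about the commutant.

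Having established \eqref{Eqn:Id2} for $\phi = \mathbf 1_A$, the proof concludes in one line: $P^U(A) = \mathbf 1_A(U) = S_0 \mathbf 1_A(U) S_0^* + S_1 \mathbf 1_A(M_{e_1}U) S_1^* = S_0 P^U(A) S_0^* + S_1 P^{M_{e_1}U}(A) S_1^*$, which is \eqref{Eqn:MeasDecomp}. I expect this argument to be short; the bulk of the work has already been front-loaded into Lemma \ref{Lemma:S_1U^k}, Equation \eqref{Eqn:Id1}, and Equation \eqref{Eqn:S_1phi}.
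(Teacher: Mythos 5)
Your proposal is correct and follows essentially the same route as the paper: the paper's proof likewise specializes Equation \eqref{Eqn:Id2} to $\phi = \chi_A$, noting that $P^U\!(A) = \chi_A(U)$ commutes with $S_0$ and $S_0^*$. The only difference is that you spell out the monotone-class/bounded-convergence argument extending \eqref{Eqn:S_1phi} from Laurent polynomials to bounded Borel $\phi$, a step the paper leaves implicit.
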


\begin{proof}
If $A$ is any Borel subset of $\T$, we can set $\phi = \chi_A$.  Then $P^U(A) = \chi_{A}(U)$, so $P^U\!(A)\in B(L^2(\mu_{\frac{1}{4}}))$ commutes with both $S_0$ and $S_0^*$.  Substituting $\phi = \chi_A$ into Equation \eqref{Eqn:Id2} yields Equation \eqref{Eqn:MeasDecomp}.
\end{proof}

\subsection{Decompositions of spectral measures}\label{Subsec:SpecMeas}

We now turn to the scalar measures defined from the pvms $P^{U}$ and $P^{M_{e_1}U}$.  For each vector $v\in L^2(\mu_{\frac14})$ and unitary operator $X$ on $L^2(\mu_{\frac14})$, the real-valued Borel measure $m^X_v$ is defined on the Borel set $A\subset \T$ by
\begin{equation}\label{Eqn:m^{X}_v}
m^{X}_v(A) = \langle P^X\!(A)v, v \rangle_{L^2(\mu_{\frac14})}.
\end{equation}
When $v$ is a unit vector, the measure defined by Equation \eqref{Eqn:m^{X}_v} is a probability measure.
\begin{proposition}\label{Prop:RealDecomp}
Fix a unit vector $v\in L^2(\mu_{\frac{1}{4}})$.  Let $m^{U}_v$ and $m^{MU}_v$
be the spectral measures associated with $U$ and $M_{e_1}U$ respectively.  Then
\begin{equation}\label{Eqn:RealDecomp}
m^{U}_v = m^{U}_{S_0^*v}  + m^{MU}_{S_1^*v}.
\end{equation}
\end{proposition}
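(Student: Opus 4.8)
The plan is to apply the operator identity \eqref{Eqn:MeasDecomp} to a fixed Borel set $A \subset \T$, pair the resulting operator equation against the unit vector $v$, and then recognize each of the two terms on the right-hand side as a scalar spectral measure evaluated at $A$. Concretely, I would start from
\[
m^U_v(A) = \langle P^U(A) v, v\rangle = \langle S_0 P^U(A) S_0^* v, v\rangle + \langle S_1 P^{M_{e_1}U}(A) S_1^* v, v\rangle,
\]
using Proposition \ref{Prop:PVMDecomp}. The first step is then to move $S_0$ and $S_1$ across the inner product to their adjoints, giving $\langle P^U(A) S_0^* v, S_0^* v\rangle + \langle P^{M_{e_1}U}(A) S_1^* v, S_1^* v\rangle$, which is exactly $m^U_{S_0^* v}(A) + m^{MU}_{S_1^* v}(A)$ by the definition \eqref{Eqn:m^{X}_v}. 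Since $A$ was arbitrary, the two measures agree on all Borel subsets of $\T$, hence are equal.

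The one point that deserves a remark (rather than a genuine obstacle) is that the measures $m^U_{S_0^* v}$ and $m^{MU}_{S_1^* v}$ in \eqref{Eqn:m^{X}_v} were introduced for \emph{unit} vectors, whereas $S_0^* v$ and $S_1^* v$ need not be unit vectors — indeed $\|S_0^* v\|^2 + \|S_1^* v\|^2 = \|v\|^2 = 1$, so generically each has norm strictly between $0$ and $1$, and one of them may be zero. So I would first note that Equation \eqref{Eqn:m^{X}_v} makes sense for an arbitrary vector $w$, defining a positive Borel measure of total mass $\|w\|^2$ (this is immediate from positivity of the projection-valued measure: $\langle P^X(A) w, w\rangle = \|P^X(A) w\|^2 \ge 0$ and $\langle P^X(\T) w, w\rangle = \|w\|^2$); only the normalization changes, not the construction. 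With that convention in place, the decomposition \eqref{Eqn:RealDecomp} is an identity of positive measures of equal total mass $1$ on both sides, and the degenerate case $S_0^* v = 0$ or $S_1^* v = 0$ simply makes the corresponding summand the zero measure, which is consistent.

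I expect no real difficulty here; the content of the proposition is entirely carried by Proposition \ref{Prop:PVMDecomp} (equivalently, by the $\mathcal{O}_2$-identity \eqref{Eqn:Id1} applied to $\phi(U)$), and this statement is the ``scalarization'' of that operator-level fact. The proof is therefore a two-line computation once \eqref{Eqn:MeasDecomp} is invoked, plus the bookkeeping remark about norms above. If anything, the place to be slightly careful is making sure the adjoint manipulation $\langle S_i T S_i^* v, v\rangle = \langle T S_i^* v, S_i^* v\rangle$ is written correctly for $i = 0, 1$ — but this is just the definition of the adjoint and needs no further justification.

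\begin{proof}
Let $w\in L^2(\mu_{\frac14})$ be arbitrary. Since $P^X$ is a projection-valued measure, for every Borel set $A\subset\T$ we have $\langle P^X(A)w,w\rangle = \|P^X(A)w\|^2 \ge 0$, and $\langle P^X(\T)w,w\rangle = \|w\|^2$; thus Equation \eqref{Eqn:m^{X}_v} defines a positive Borel measure $m^X_w$ on $\T$ of total mass $\|w\|^2$, for any vector $w$ (not only unit vectors). In particular $m^U_{S_0^*v}$ and $m^{MU}_{S_1^*v}$ are well-defined positive measures, of total masses $\|S_0^*v\|^2$ and $\|S_1^*v\|^2$, and $\|S_0^*v\|^2 + \|S_1^*v\|^2 = \langle (S_0S_0^* + S_1S_1^*)v, v\rangle = \|v\|^2 = 1$.

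Now fix a Borel set $A\subset\T$. Applying Proposition \ref{Prop:PVMDecomp} and pairing Equation \eqref{Eqn:MeasDecomp} against $v$, we obtain
\[
m^{U}_v(A) = \langle P^{U}(A)v, v\rangle = \langle S_0 P^{U}(A) S_0^* v, v\rangle + \langle S_1 P^{M_{e_1}U}(A) S_1^* v, v\rangle.
\]
Moving $S_0$ and $S_1$ to the other side of each inner product as their adjoints $S_0^*$ and $S_1^*$ gives
\[
m^{U}_v(A) = \langle P^{U}(A) S_0^* v, S_0^* v\rangle + \langle P^{M_{e_1}U}(A) S_1^* v, S_1^* v\rangle = m^{U}_{S_0^*v}(A) + m^{MU}_{S_1^*v}(A),
\]
where the last equality is the definition \eqref{Eqn:m^{X}_v}. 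Since $A$ was an arbitrary Borel subset of $\T$, the measures $m^{U}_v$ and $m^{U}_{S_0^*v} + m^{MU}_{S_1^*v}$ agree on all Borel sets, hence
\[
m^{U}_v = m^{U}_{S_0^*v} + m^{MU}_{S_1^*v}.
\]
\end{proof}
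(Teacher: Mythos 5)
Your proof is correct and follows essentially the same route as the paper's: apply Proposition \ref{Prop:PVMDecomp} to a Borel set $A$, pair against $v$, and move $S_0$, $S_1$ across the inner product to recognize the two scalar measures. The added remark that \eqref{Eqn:m^{X}_v} extends to non-unit vectors (with total mass $\|w\|^2$) is a sensible clarification the paper leaves implicit.
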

\begin{proof}
Let $A\subset \T$ be a Borel set and $v$ a unit vector in $\T$.  Using Equation \eqref{Eqn:MeasDecomp}, we can write
\begin{equation}
\begin{split}
m^{U}_v(A) 
& = \langle P^{U}\!(A)v, v \rangle_{L^2(\mu_{\frac14})}\\
& =  \langle S_0 P^{U}\!(A) S_0^*v + S_1 P^{MU}\!(A) S_1^*v, v \rangle_{L^2(\mu_{\frac14})}\\
& = \langle S_0 P^{U}\!(A) S_0^*v, v\rangle_{L^2(\mu_{\frac14})} +  \langle S_1 P^{MU}\!(A) S_1^*v, v \rangle_{L^2(\mu_{\frac14})}\\
& = \langle P^{U}\!(A) S_0^*v, S_0^*v\rangle_{L^2(\mu_{\frac14})} +  \langle P^{MU}\!(A) S_1^*v, S_1^*v \rangle_{L^2(\mu_{\frac14})}\\
& = m^{U}_{S_0^*v}(A)  + m^{MU}_{S_1^*v}(A). 
\end{split}
\end{equation}
Stated differently,
\begin{equation}\label{Eqn:dm_decomp}
dm^{U}_v(z) = dm^{U}_{S_0^*v}(z)  + dm^{MU}_{S_1^*v}(z)
\end{equation}
\end{proof}

\begin{example}\label{Ex:e_n}The real-valued spectral measures on exponential functions.  
\end{example}
\noindent \textbf{Case 1:  $n$ even.}  Let $n = 2m\in 2\Z$.  Then $e_n\in \overline{E(4\Gamma)}$ since
\[ \widehat{\mu}_{\frac{1}{4}}(4\gamma + 1 -2m) = 0\]
for all $\gamma\in\Gamma$.
Therefore we can choose $h\in L^2(\mu_{\frac{1}{4}})$ such that $e_n = S_0 h$.  As a result,
\begin{equation}
m_{e_n}^U = m^{U}_{S_0^*S_0h}  + m^{MU}_{S_1^*S_0h} = m^{U}_{h},
\end{equation}
since for any Borel set $A\subset \T$,
\[m^{MU}_{e_0}(A) = \langle P^{MU}\!(A)\:e_0, e_0\rangle_{L^2(\mu_{\frac{1}{4}})} = 0\]
and $S_1^*S_0h = 0$.

\noindent\textbf{Case 2:  $n$ odd.  }Let $n = 2m + 1$, and compute
\[ \widehat{\mu}_{\frac{1}{4}}(2m + 1 - 4\gamma) = 0.\]
Therefore $e_n\in \overline{E(4\Gamma + 1)}$ (the closed linear span of $E(4\Gamma + 1)$); in other words, $e_n = S_1 h'$ for some $h'\in L^2(\mu_{\frac{1}{4}})$.  Then
\begin{equation}
m_{e_n}^U = m^{U}_{S_0^*S_1h'}  + m^{MU}_{S_1^*S_1h'} = m^{MU}_{h'}.
\end{equation}
In other words, $m_{S_1h'}^U = m^{MU}_{h'}$ for the $h'$ we chose above.  
\hfill$\Diamond$

\begin{example}\label{Ex:S_0h}Let $h\in L^2(\mu_{\frac{1}{4}})$ be a unit vector.  Then for each $k\in\N$, $m^U_{h} = m^U_{S_0^k h}$.
\end{example}
\noindent  Since $S_0$ is an isometry, $S_0^nh$ is a unit vector for all $n\in\N$.  For $k = 1$, we perform almost the same calculation as before:  for any unit vector $h$,
\[m_{S_0h}^U = m^{U}_{S_0^*S_0h}  + m^{MU}_{S_1^*S_0h} = m^{U}_{h}.\]
Now suppose $m^U_{v} = m^U_{S_0^n v}$ for all unit vectors $v$ and for $k =n$.  Then, regrouping, we have
$m_{S_0^{n+1}h}^U 
 = m_{S_0^n S_0h}$, which by the inductive hypothesis for $v = S_0h$, is $ m_{S_0h}$.  
Then by our base case,
$m_{S_0h} 
= m_h$.
\hfill$\Diamond$

\begin{corollary}\label{Cor:EqualMeasures}
For each $k\in \N$, $m^U_{e_1} = m^U_{e_{4^k}}$.
\end{corollary}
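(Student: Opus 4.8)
The plan is to derive the corollary directly from Example~\ref{Ex:S_0h}, using only the extension of $S_0$ to all exponentials recorded in Lemma~\ref{Lemma:Extend}. The first step is to observe that $e_1$ is a unit vector in $L^2(\mu_{\frac14})$: since $\mu_{\frac14}$ is a probability measure, $\|e_1\|^2 = \int |e^{2\pi i t}|^2\dd\mu_{\frac14}(t) = 1$. Hence Example~\ref{Ex:S_0h} applies with $h = e_1$.

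The second step is a one-line induction identifying $S_0^k e_1$. By Lemma~\ref{Lemma:Extend}, $S_0 e_n = e_{4n}$ for every $n\in\Z$, so $S_0 e_1 = e_4$; and if $S_0^k e_1 = e_{4^k}$, then $S_0^{k+1} e_1 = S_0 e_{4^k} = e_{4^{k+1}}$. Thus $S_0^k e_1 = e_{4^k}$ for every $k\in\N$.

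The last step is simply to quote Example~\ref{Ex:S_0h}: for every unit vector $h$ and every $k\in\N$ one has $m^U_h = m^U_{S_0^k h}$. Taking $h = e_1$ and inserting the identity $S_0^k e_1 = e_{4^k}$ gives $m^U_{e_1} = m^U_{S_0^k e_1} = m^U_{e_{4^k}}$, which is the claim.

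There is no substantive obstacle here: all the real work is carried by Example~\ref{Ex:S_0h}, which in turn rests on Proposition~\ref{Prop:RealDecomp} together with the facts $S_1^* S_0 = 0$ and $m^{MU}_{e_0} = 0$. The only thing to verify afresh is the elementary fact that iterating $S_0$ on $e_1$ lands on $e_{4^k}$, and a fully self-contained write-up would merely inline the base case $m^U_{S_0 h} = m^U_{S_0^* S_0 h} + m^{MU}_{S_1^* S_0 h} = m^U_h$ coming from Proposition~\ref{Prop:RealDecomp} before running the induction.
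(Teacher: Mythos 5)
Your proposal is correct and follows exactly the paper's own argument, which also just applies Example~\ref{Ex:S_0h} together with the identity $e_{4^k}=S_0^k e_1$. The only cosmetic difference is that you invoke Lemma~\ref{Lemma:Extend} for $S_0e_n=e_{4n}$, whereas the defining formula \eqref{Eqn:Cuntz} on the basis $E(\Gamma(\frac14))$ already suffices since every $4^k$ lies in $\Gamma(\frac14)$.
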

\begin{proof}
We apply Example \ref{Ex:S_0h} and induction: $e_{4^k} = S_0^ke_1$.
\end{proof}

Even though determining $m^{U}_h$ for a general unit vector $h$ can be difficult, the following example shows how some cross-terms can be eliminated fairly easily with Proposition \ref{Prop:RealDecomp}.

\begin{example}\label{Ex:CrossTerms}
One more simple example.
\end{example}

\noindent Let $h = \frac{1}{\sqrt{2}}(e_0 + e_5)$, and let $A\subset \T$ be a Borel set.  Then without Proposition \ref{Prop:RealDecomp} which allows for quick elimination of cross-terms, the computation of $m^U_h$ would be bulky:\begin{equation}
\begin{split}
m^U_h & = \Biggl\langle P^U\!(A)\Biggl(\frac{1}{\sqrt{2}}(e_0 + e_5)\Biggr), \frac{1}{\sqrt{2}}(e_0 + e_5)\Biggr\rangle\\
& = \frac{1}{2} \langle P^U\!(A)(e_0 + e_5), (e_0 + e_5)\rangle\\
& = \frac{1}{2} m^U_{e_0} + \frac{1}{2} m^{MU}_{e_5}\\
& \phantom{==}  + \frac{1}{2} \langle P^U\!(A)e_0, e_5\rangle + \frac{1}{2} \langle P^U\!(A)e_5, e_0\rangle.
\end{split}
\end{equation}

On the other hand, writing $h = \frac{1}{\sqrt{2}}(S_0e_0 + S_1e_1)$ makes the cross-terms disappear quickly in the
application of Equation \eqref{Eqn:RealDecomp}:
\begin{equation}
\begin{split}
m^{U}_h 
& = m^{U}_{S_0^*(\frac{1}{\sqrt{2}}(S_0e_0 + S_1e_1))}  + m^{MU}_{S_1^*(\frac{1}{\sqrt{2}}(S_0e_0 + S_1e_1))}\\
& = m^U_{\frac{1}{\sqrt{2}}e_0} + m^{MU}_{\frac{1}{\sqrt{2}}e_1}\\
\end{split}
\end{equation}
The measure $m^{U}_{e_0}$ is the Dirac mass $\delta_1$ because $Ue_0 = e_0$.  A proof is given in \cite[Proof of Lemma 4.1, $(\Leftarrow)$ direction]{JKS12}.
\hfill$\Diamond$

\begin{theorem}\label{Thm:Iterate}
Let $v\in L^2(\mu_{\frac14})$.  Then
\begin{equation}\label{Eqn:Iterate_2}
m_v^U = |\langle v, e_0\rangle|^2\delta_1 + \sum_{k=0}^{\infty}m_{S_1^*S_0^{*k}v}^{MU}.
\end{equation}
\end{theorem}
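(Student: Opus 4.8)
The plan is to iterate the one-step decomposition in Proposition \ref{Prop:RealDecomp} and then identify the limit using the Wold decomposition of $S_0$ from Proposition \ref{Prop:WoldS_0}. First I would apply Equation \eqref{Eqn:RealDecomp} to $v$, obtaining $m^U_v = m^U_{S_0^*v} + m^{MU}_{S_1^*v}$. Then I would reapply it to the vector $S_0^*v$, giving $m^U_{S_0^*v} = m^U_{S_0^{*2}v} + m^{MU}_{S_1^*S_0^*v}$, and continue inductively. After $N$ steps this yields the finite identity
\begin{equation*}
m^U_v = m^U_{S_0^{*(N+1)}v} + \sum_{k=0}^{N} m^{MU}_{S_1^*S_0^{*k}v},
\end{equation*}
which I would prove by an easy induction on $N$ (the inductive step is just one more application of \eqref{Eqn:RealDecomp} to the remainder term $m^U_{S_0^{*(N+1)}v}$).

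Next I would let $N\to\infty$. The sum on the right is a sum of positive measures, so it converges (increasingly) to a well-defined Borel measure; concretely, its total mass at any Borel $A\subseteq\T$ is bounded by $\|v\|^2$ since each term $m^{MU}_{S_1^*S_0^{*k}v}(A)\le \|S_1^*S_0^{*k}v\|^2$ and these norms sum to at most $\|v\|^2$ (the subspaces $S_0^kS_1L^2(\mu_{\frac14})$ are mutually orthogonal by \eqref{Eqn:Decomp}, so $\sum_k \|S_1^*S_0^{*k}v\|^2 = \sum_k \|P_{S_0^kS_1L^2}v\|^2 \le \|v\|^2$). The remainder term requires identifying $\lim_{N\to\infty} m^U_{S_0^{*(N+1)}v}$. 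By Proposition \ref{Prop:WoldS_0}(a), $S_0^n(S_0^n)^*v \to \langle e_0, v\rangle e_0$ in norm, and since $(S_0^n)^*$ is a partial isometry this also controls $(S_0^n)^*v$: one has $\|(S_0^n)^*v\|^2 = \langle S_0^n(S_0^n)^*v, v\rangle \to |\langle e_0,v\rangle|^2$, and more precisely $(S_0^n)^*v$ converges to the unique vector whose image under $S_0^n$ is the limit above — examining \eqref{Eqn:S0Adj} shows $(S_0^n)^*v \to \langle e_0,v\rangle e_0$ as well (since $S_0^n$ is isometric and $S_0^n e_0 = e_0$). Hence for each fixed Borel $A$,
\begin{equation*}
m^U_{S_0^{*(N+1)}v}(A) = \langle P^U(A) S_0^{*(N+1)}v, S_0^{*(N+1)}v\rangle \to \langle P^U(A)\langle e_0,v\rangle e_0, \langle e_0,v\rangle e_0\rangle = |\langle v,e_0\rangle|^2 m^U_{e_0}(A),
\end{equation*}
using joint continuity of the inner product in the norm topology and boundedness of $P^U(A)$.

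Finally I would invoke the fact, recorded just before the theorem statement (and proved in \cite[Proof of Lemma 4.1]{JKS12}), that $m^U_{e_0} = \delta_1$ because $Ue_0 = e_0$, so the remainder limit is exactly $|\langle v,e_0\rangle|^2\delta_1$. Combining the three ingredients — the finite iterated identity, the convergence of the positive series, and the identification of the remainder limit — gives \eqref{Eqn:Iterate_2}. The main obstacle I anticipate is the justification that $m^U_{S_0^{*(N+1)}v} \to |\langle v,e_0\rangle|^2\delta_1$ in the appropriate sense (pointwise on Borel sets, or weak-$*$): one must be careful that the convergence $S_0^{*(N+1)}v \to \langle v,e_0\rangle e_0$ holds in norm, not merely weakly, which is where Proposition \ref{Prop:WoldS_0} and the explicit formula \eqref{Eqn:S0Adj} for $S_0^*$ do the real work; everything else is bookkeeping with positive measures and the already-established one-step splitting.
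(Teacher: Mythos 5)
Your proposal is correct and follows essentially the same route as the paper: iterate the one-step splitting of Proposition \ref{Prop:RealDecomp} to get the finite identity, then identify the remainder term via the Wold decomposition of $S_0$ (Proposition \ref{Prop:WoldS_0}) and the fact that $m^U_{e_0}=\delta_1$. The only cosmetic difference is that you establish norm convergence of $S_0^{*n}v$ to $\langle e_0,v\rangle e_0$ directly, whereas the paper inserts $S_0^n$ (using $S_0\in\{U\}'$ and the isometry property) and estimates $\langle P_nv,\phi(U)P_nv\rangle$ with Cauchy--Schwarz; both hinge on the same limit $S_0^nS_0^{*n}v\to P_{e_0}v$.
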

\begin{proof}
By Equation \eqref{Eqn:dm_decomp} we have $m_v^U = m_{S_0^*v}^U + m_{S_1^*v}^{MU}$.  Apply \eqref{Eqn:dm_decomp} to $S_0^*v$ to obtain
\begin{equation}
\begin{split}
m_v^U 
& = m_{S_0^*v}^U + m_{S_1^*v}^{MU}\\
& = m_{S_0^{*2}v}^U + m_{S_1^*S_0^*v}^{MU} + m_{S_1^*v}^{MU}.\\
\end{split}
\end{equation}
Continuing the process, we find that for any $n\in\N$,
\begin{equation}\label{Eqn:Iterate}
m_v^U = m_{S_0^{*n}v}^U + \sum_{k=0}^{n-1}m_{S_1^*S_0^{*k}v}^{MU}.
\end{equation}
We claim that
\begin{equation}\label{Eqn:Claim}
\lim_{n\rightarrow\infty} m_{S_0^{*n}v}^U = m^U_{P_{e_0}v} = |\langle v, e_0\rangle|^2 \delta_1.
\end{equation}
The right-hand side of Equation \eqref{Eqn:Claim} follows from the definition of $m^U_{P_{e_0}v}$.
Let $\phi\in C(\T)$.  Then
\begin{equation}
\begin{split}
&  \int_{\T} \phi(z) \: dm^U_{P_{e_0}v} 
= \langle P_{e_0}v, \phi(U)P_{e_0}v\rangle_{L^2(\mu_{\frac14})}\\
& = \Bigl\langle \langle v, e_0\rangle e_0, \phi(U)\langle v, e_0\rangle e_0\Bigr\rangle_{L^2(\mu_{\frac14})}\\
& = |\langle v, e_0\rangle|^2 \langle e_0, \phi(U)e_0\rangle_{L^2(\mu_{\frac14})}\\
& = |\langle v, e_0\rangle|^2  \int_{\T} \phi(z) \: dm^U_{e_0} = |\langle v, e_0\rangle|^2 \phi(1),\\
\end{split}
\end{equation}
by Example \ref{Ex:CrossTerms}.

To establish the left-hand side of Equation \eqref{Eqn:Claim}, we show
\begin{equation}\label{Eqn:ToEstimate}
\lim_{n\rightarrow\infty}\Big| \langle S_0^{*n}v, \phi(U) S_0^{*n}v\rangle_{L^2(\mu_{\frac14})} - |\langle v, e_0\rangle|^2 \phi(1)\Big| = 0.
\end{equation}
for any $\phi\in C(\T)$.
Since $S_0$ is an isometry and $S_0\in \{U\}'$, we can perform the following operations:
\begin{equation}
\begin{split}
& \Big| \langle S_0^{*n}v, \phi(U) S_0^{*n}v\rangle_{L^2(\mu_{\frac14})} - |\langle v, e_0\rangle|^2 \phi(1)\Big|\\
& =  \Big|  \langle S_0^nS_0^{*n}v, \phi(U) S_0^nS_0^{*n}v\rangle_{L^2(\mu_{\frac14})} - |\langle v, e_0\rangle|^2 \phi(1)\Big|\\
\end{split}
\end{equation}
For ease of notation, let $P_n$ denote $S_0^nS_0^{*n}$.  By properties of the Wold decomposition, $\{P_n\}$ is a decreasing sequence of projections, and by Proposition \ref{Prop:WoldS_0}, part (a), we have 
\begin{equation}\label{Eqn:WoldLimit}
\lim_{n\rightarrow\infty} P_n v = P_{e_0}v.
\end{equation}
Now, back to Equation \eqref{Eqn:ToEstimate}:
\begin{equation*}
\begin{split}
& \Big| \langle P_n v, \phi(U) P_n v\rangle_{L^2(\mu_{\frac14})} - |\langle v, e_0\rangle|^2 \phi(1)\Big|\\
& = \Big| \langle (P_n - P_{e_o}) v, \phi(U) P_n v\rangle_{L^2(\mu_{\frac14})} + \langle P_{e_0} v, \phi(U) (P_n-P_{e_0}) v\rangle_{L^2(\mu_{\frac14})}\\
& \phantom{==} + \langle P_{e_0} v, \phi(U) P_{e_0}v\rangle _{L^2(\mu_{\frac14})} - |\langle v, e_0\rangle|^2 \phi(1)\Big|\\
& =  \Big| \langle (P_n - P_{e_o}) v, \phi(U) P_n v\rangle_{L^2(\mu_{\frac14})} + \langle P_{e_0} v, \phi(U) (P_n-P_{e_0}) v\rangle_{L^2(\mu_{\frac14})}\Big|.\\
\end{split}
\end{equation*}
We can now apply Cauchy-Schwarz to the two remaining inner products.  We have
\begin{equation*}
\begin{split}
& \Big|\langle (P_n - P_{e_o}) v, \phi(U) P_n v\rangle_{L^2(\mu_{\frac14})}\Big|\\
& \leq \|(P_n - P_{e_0})v\| \underbrace{\|\phi(U)\|_{\textrm{op}} \|v\|}_{\textrm{constant}}\\
& \rightarrow 0 \textrm{ as }n\rightarrow\infty \textrm{ by } \eqref{Eqn:WoldLimit}
\end{split}
\end{equation*}
and
\begin{equation*}
\begin{split}
& \Big|\langle P_{e_0} v, \phi(U) (P_n-P_{e_0}) v\rangle_{L^2(\mu_{\frac14})}\Big|\\
& \leq \underbrace{|\langle e_0, v\rangle| \|\phi(U)\|_{\textrm{op}}}_{\textrm{constant}} \|(P_n-P_{e_0}) v\|\\
& \rightarrow 0 \textrm{ as }n\rightarrow\infty \textrm{ also by } \eqref{Eqn:WoldLimit}.
\end{split}
\end{equation*}
Therefore Equation \eqref{Eqn:ToEstimate} is true, and Equation \eqref{Eqn:Iterate} then becomes
\begin{equation}
\begin{split}
m_v^U = |\langle v, e_0\rangle|^2\delta_1  + \sum_{k=0}^{\infty}m_{S_1^*S_0^{*k}v}^{MU}.
\end{split}
\end{equation}
\end{proof}

\noindent\textbf{Remark 1:  }Normalization in the infinite expansion in Theorem \ref{Thm:Iterate}.

We can write Equation \eqref{Eqn:Iterate_2} in terms of probability measures denoted by $\widetilde{m}_{S_1^*S_0^{*k}v}^{MU}$ (the tilde denotes that we start with a unit vector, so that $S_1^*S_0^{*k}v$ is normalized:
\begin{equation}\label{Eqn:Normalized}
m_v^U = |\langle v, e_0\rangle|^2\delta_1 + \sum_{k=0}^{\infty}\| S_1^*S_0^{*k}v \|^2\widetilde{m}_{S_1^*S_0^{*k}v}^{MU},
\end{equation}
where $\|S_1^* S_0^{*k}\|^2 = P^v_{\Gamma}(A(\underbrace{0, \ldots, 0}_{k}, 1))$ is the measure discussed in Section \ref{Sec:Induced}; see \eqref{Eqn:PGamma} and Lemma \ref{Lemma:Borel}.
Note that the constants in \eqref{Eqn:Normalized} are independent of the operator $U$ which denotes scaling by $5$.  Therefore, if another $U$ satisfies the conditions set out in Section \ref{Sec:GenRel} and $v$ is a unit vector, then $m^U_v$ has the same type of decomposition as in \eqref{Eqn:Iterate_2}.

\bigskip

We can make more quantitative observations about Equation \eqref{Eqn:Normalized}.  Let $v$ be a unit vector in $L^2(\mu_{\frac14})$, where 
\begin{equation}
v = \sum_{\gamma\in\Gamma(\frac14)} \langle e_{\gamma}, v\rangle e_{\gamma}  = \sum_{\gamma\in\Gamma(\frac14)} c_v(\gamma) e_\gamma.
\end{equation}
From this expansion, we have
\begin{equation}
\|v\|^2 = \sum_{\gamma\in\Gamma(\frac14)} |c_v(\gamma)|^2.
\end{equation}
Now consider the mutually disjoint subsets of $\Gamma(\frac14)$ from Equation \eqref{Eqn:Decomp} and strings of $0$s and $1$s from Equation \eqref{Eqn:Gamma}:
\begin{itemize}
\item $1 + 4\Gamma(\frac14)$ corresponds to strings in $0$ and $1$ of the form $(1, *, *, \ldots)$ and to the subspace $S_1L^2(\mu_{\frac14})$
\item $4(1 + 4\Gamma(\frac14))$ corresponds to strings in $0$ and $1$ of the form $(0, 1, *, *, \ldots)$ and to the subspace
$S_0S_1L^2(\mu_{\frac14})$
\item and in general, $4^k(1+4\Gamma(\frac14))$ corresponds to strings in $0$ and $1$ of the form $(\underbrace{0, \ldots, 0}_k,1,*,*,\cdots)$ and to the subspace $S_0^kS_1L^2(\mu_{\frac14})$.
\end{itemize}
We can then interpret the normalization constant $\|S_1^*S_0^{*k}v\|^2$ as the probability assigned to all infinite words in the third bullet above---that is,
\begin{equation}\label{Eqn:ProbNorm}
\|S_1^*S_0^{*k}v\|^2 = \sum_{\gamma\in 4^k(1+4\Gamma)} |c_v(\gamma)|^2
\end{equation}
is the probability assigned to all infinite words beginning with the string $(\underbrace{0, \ldots, 0}_{k}1)$.

Another interpretation of the normalization constants in Equation \eqref{Eqn:Normalized} is the following.  Let $P_k$ be the projection onto the subspace $S_0^kS_1L^2(\mu_{\frac14})$---that is,
\begin{equation}\label{Eqn:Pk}
P_k = (S_0^kS_1) (S_0^kS_1)^*.
\end{equation}
Note this is not the same $P_k$ associated with the Wold decomposition in Theorem \ref{Thm:Iterate}.  Then
\begin{equation}
\|S_0^{*k}S_1^*v\|^2 = \|P_k v\|^2;
\end{equation}
therefore the normalized measure in \eqref{Eqn:Normalized} can be written
\begin{equation}
m_{S_0^{*k}S_1^*v}^{MU} =   \| P_k v \|^2\widetilde{m}_{S_1^*S_0^{*k}v}^{MU}.
\end{equation}
\begin{corollary}\label{Cor:Convex}
Let $v\in L^2(\mu_{\frac14})$ with $\|v\| = 1$.  Let $\rm{Prob}_{1}(\T)$ denote the space of probability measures on the circle $\T$.  Then every measure of the form $m_v^U\in \rm{Prob}_{1}(\T)$ has a convex representation
\begin{equation}\label{Eqn:Convex}
m_v^U = |\langle e_0, v\rangle|^2\delta_1 + \sum_{k=0}^{\infty} \| P_k v \|^2\widetilde{m}_{S_1^*S_0^{*k}v}^{MU},
\end{equation}
where $\delta_1$ and $\widetilde{m}_{S_1^*S_0^{*k}v}^{MU}$ belong to $\rm{Prob}_{1}(\T)$.
\end{corollary}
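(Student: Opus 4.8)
The plan is to show that the identity \eqref{Eqn:Convex} is just a restatement of Theorem \ref{Thm:Iterate} together with the normalization bookkeeping already carried out in Remark 1 and Equation \eqref{Eqn:Normalized}, and that the only genuinely new content to verify is that the coefficients are nonnegative and sum to $1$, so that the right-hand side really is a convex combination of elements of $\mathrm{Prob}_1(\T)$. First I would invoke Theorem \ref{Thm:Iterate} in the normalized form \eqref{Eqn:Normalized}, rewriting the $k$-th term using $\|S_1^*S_0^{*k}v\|^2 = \|P_k v\|^2$ where $P_k = (S_0^k S_1)(S_0^k S_1)^*$ is the orthogonal projection defined in \eqref{Eqn:Pk}; this is immediate since $S_0^k S_1$ is an isometry, so $(S_0^kS_1)^*(S_0^kS_1)=I$ and hence $\|(S_0^kS_1)^* v\|^2 = \langle P_k v, v\rangle = \|P_k v\|^2$. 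That gives \eqref{Eqn:Convex} verbatim.

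Next I would check that each measure appearing is a probability measure: $\delta_1$ trivially is; $\widetilde{m}_{S_1^*S_0^{*k}v}^{MU}$ is a probability measure by the sentence following \eqref{Eqn:m^{X}_v}, since it is by definition $m^{MU}_w$ for the \emph{unit} vector $w = S_1^*S_0^{*k}v / \|S_1^*S_0^{*k}v\|$ (with the convention that the term is simply absent, or the coefficient is $0$, when $S_1^*S_0^{*k}v = 0$, in which case no normalized measure is needed). Then I would verify the coefficients form a convex-combination weight system: $|\langle e_0, v\rangle|^2 \ge 0$ and $\|P_k v\|^2 \ge 0$ are obvious, and the total mass is
\[
|\langle e_0, v\rangle|^2 + \sum_{k=0}^\infty \|P_k v\|^2 = \|P_{e_0} v\|^2 + \sum_{k=0}^\infty \|P_k v\|^2 = \|v\|^2 = 1,
\]
which is exactly the orthogonal decomposition \eqref{Eqn:Decomp}, namely $L^2(\mu_{\frac14}) = \mathrm{span}\{e_0\} \oplus \bigoplus_{k=0}^\infty S_0^k S_1 L^2(\mu_{\frac14})$, applied via Parseval to $v$; equivalently it is the identity $P_{e_0} + \sum_k P_k = I$ paired against $v$. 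Combining the three observations, the right-hand side of \eqref{Eqn:Convex} is a (countable) convex combination of elements of $\mathrm{Prob}_1(\T)$, hence lies in $\mathrm{Prob}_1(\T)$, and equals $m_v^U$ by Theorem \ref{Thm:Iterate}.

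I do not expect a serious obstacle here — the corollary is essentially a repackaging of Theorem \ref{Thm:Iterate}. The one point requiring a little care is the handling of indices $k$ for which $S_1^*S_0^{*k}v = 0$: there the normalized measure $\widetilde{m}_{S_1^*S_0^{*k}v}^{MU}$ is not literally defined, so I would state explicitly that such terms are omitted from the sum (their coefficient $\|P_k v\|^2$ being zero), which does not affect either the total mass computation or the equality with $m_v^U$. A secondary minor point is justifying interchange of the (countable) sum with the pairing against test functions when asserting membership in $\mathrm{Prob}_1(\T)$, but this is immediate from monotone convergence applied to the partial sums of the nonnegative measures, or simply from the fact that $\mathrm{Prob}_1(\T)$ is closed under countable convex combinations.
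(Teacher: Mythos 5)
Your proposal is correct and follows essentially the same route as the paper: the paper likewise treats the corollary as a repackaging of Theorem \ref{Thm:Iterate} via the normalized form \eqref{Eqn:Normalized}, identifies the weights as $\|P_k v\|^2$, and verifies convexity from $\sum_{k}P_k = I - P_{e_0}$ so that $1 = \|v\|^2 = |\langle e_0,v\rangle|^2 + \sum_k \|P_k v\|^2$. Your extra care about indices with $S_1^*S_0^{*k}v=0$ and about countable convex combinations goes slightly beyond what the paper writes, but is consistent with it.
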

\begin{proof}
Recall that
\begin{equation}
\sum_{k=0}^{\infty} P_k = I_{L^2(\mu_{\frac14})} - P_{e_0}.
\end{equation}
Then
\begin{equation}
1 = \|v\|^2 =  |\langle e_0, v\rangle|^2 + \sum_{k=0}^{\infty}  \| P_k v \|^2,
\end{equation}
which shows that \eqref{Eqn:Convex} is indeed a convex expansion.
\end{proof}

We can connect the normalization constants above to the induced measures studied in Section \ref{Sec:Induced}.  Let
\[
\gamma\leftrightarrow (a_0, a_1, \ldots, a_N) \textrm{ and }\xi \leftrightarrow (b_0, b_1, \ldots, b_K).
\]  
Then the concatenation $\gamma\xi$ represents 
\[ \sum_{i = 0}^N a_i 4^i + \sum_{i = 0}^K b_i 4^{1+N+i}\in \Gamma\Bigl(\frac14\Bigr).\]

Next, consider the Cuntz operators $S_0$ and $S_1$.  If $\eta \leftrightarrow (c_0, \ldots, c_M)$ is a finite word in $0$s and $1$s, then let
\[ S_{\eta}: = S_{c_0} \cdots S_{c_M}.\]
There is a straightforward action of $S_0^*$ and $S_1^*$ on $e_{\gamma\xi}$.  We have
\begin{equation}\label{Eqn:Erase}
S_{\gamma'}^*e_{\gamma\xi} = \begin{cases}e_{\xi} & \gamma = \gamma'\\
                                                                                0  & \gamma \neq \gamma'.
                                                                                \end{cases}
\end{equation}

Recall the projection $P_k$ defined in \eqref{Eqn:Pk}.  Let
\[ 
v = \sum_{\gamma\in \Gamma(\frac14)} c_v(\gamma) e_{\gamma}.
\]
Then
\begin{equation}
\|P_k v\|^2  = \Big\| \sum_{\gamma\in \Gamma(\frac14)} c_v(\gamma) S_1^*S_0^{*k} e_{\gamma} \Big\|^2
\end{equation}
By Equation \eqref{Eqn:Erase} (or by what we know about projections), the only terms $e_{\gamma}$ to survive are $\gamma\in 4^k(1+4\Gamma(\frac14))$.
Therefore we can write
\begin{equation}
\begin{split}
\|P_k v\|^2 
& =  \sum_{\gamma\in 4^k(1+\Gamma(\frac14))} |c_v(\gamma) |^2\\
& = \sum_{\xi\in \Gamma(\frac14)}  |c_v(\underbrace{0, 0, \ldots 0}_{k}, 1 \, \xi) |^2,\\
\end{split}
\end{equation}
which, by Equation \eqref{Eqn:PGamma} is the probability of the cylinder set
\begin{equation}\label{Eqn:PGamma_2}
P^v_{\Gamma}(A(\underbrace{0, 0, \ldots 0}_{k}, 1))
\end{equation}
for the induced measures in Section \ref{Sec:Induced}.

\noindent \textbf{Remark 2:  }In Theorem \ref{Thm:Iterate} below, we show that $m^U_v$  has a representation in terms of bi-measures, i.e., measures in two variables.  On the right-hand side in \eqref{Eqn:Iterate_2}, we have a bi-measure, where the first variable in this bi-measure is the $\sigma$-algebra of Borel sets in $\T$ and the second variable is Borel sets in $X$, the Cantor group in Section \ref{Sec:Induced}.  In the second variable, the bi-measure on the right-hand side in \eqref{Eqn:Iterate_2} is evaluated on ``tail-events''--- i.e., on the cylinder sets  $A(\underbrace{0, \ldots, 0}_{k}, 1)$ (in other words, the cylinder-set of the word beginning with $k$ $0$s, followed by a single $1$.)

\bigskip
\section{Radon-Nikodym derivatives and cyclic subspaces}\label{Sec:Cyclic}  

We now return to the unitary scaling operator $U$. We find some key spectral properties for $U$; we prove, among other things, that the operator has a number of intrinsic fractal features. We will use families of Radon-Nikodym derivatives for our purpose.  We study operators in the commutant of $U$, and we focus on cyclic subspaces within $L^2(\mu_{\frac{1}{4}})$.  

\begin{definition}
The \textbf{cyclic subspace} $\langle v \rangle_{U}\subset L^2(\mu_{\frac{1}{4}})$ is the closed span of the set $\{ U^kv \:|\: k\in\Z\}$.
\end{definition}
By \cite[Lemma 2.5]{JKS12}, the cyclic subspace generated by $v$ with respect to $U$ is
\[ \langle v \rangle_{U} = \{ \phi(U)v \:|\: \phi \in L^2(m_v^U)\}.\]  

\begin{lemma}\label{Lemma:CommuteAC}
Suppose $S$ commutes with $U$ and $U^*$ (that is, $S$ belongs to the commutant of $U$).  Then
\begin{equation}
m_{Sv}^{U} \ll m_v^{U}.
\end{equation}
\end{lemma}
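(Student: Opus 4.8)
The plan is to show that if $A \subset \T$ is a Borel set with $m_v^U(A) = 0$, then $m_{Sv}^U(A) = 0$ as well; this is precisely absolute continuity. The key observation is that the projection-valued measure $P^U$ interacts nicely with operators in the commutant of $U$. Concretely, for any Borel set $A$, the spectral projection $P^U(A) = \chi_A(U)$ lies in the von Neumann algebra generated by $U$; hence $S$, being in the commutant $\{U\}'$ (which commutes with $U$ and $U^*$), commutes with $P^U(A)$. Moreover $P^U(A)$ is itself an orthogonal projection, so $P^U(A)^* = P^U(A)$ and $P^U(A)^2 = P^U(A)$.

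First I would write $m_{Sv}^U(A) = \langle P^U(A) Sv, Sv\rangle$ and use $S P^U(A) = P^U(A) S$ to rewrite this as $\langle S P^U(A) v, S P^U(A) v \rangle$ after inserting the idempotent $P^U(A) = P^U(A)^2$ and moving one copy past $S$:
\begin{equation*}
m_{Sv}^U(A) = \langle P^U(A) S v, S v\rangle = \langle S P^U(A) v, S v \rangle = \langle P^U(A)^2 S v, S v\rangle = \langle S P^U(A) v, S P^U(A) v\rangle = \| S P^U(A) v\|^2.
\end{equation*}
Then bound $\| S P^U(A) v \|^2 \leq \|S\|_{\mathrm{op}}^2 \, \| P^U(A) v\|^2 = \|S\|_{\mathrm{op}}^2 \, \langle P^U(A) v, v\rangle = \|S\|_{\mathrm{op}}^2 \, m_v^U(A)$, using again that $P^U(A)$ is a self-adjoint idempotent. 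This inequality immediately gives $m_v^U(A) = 0 \implies m_{Sv}^U(A) = 0$, hence $m_{Sv}^U \ll m_v^U$; in fact it gives the stronger bound $m_{Sv}^U \leq \|S\|_{\mathrm{op}}^2 \, m_v^U$ as measures, from which one also reads off that the Radon--Nikodym derivative $dm_{Sv}^U/dm_v^U$ is bounded by $\|S\|_{\mathrm{op}}^2$.

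The only real point requiring care — and the step I would flag as the main obstacle, though it is a standard fact — is justifying that $S$ commutes with every spectral projection $P^U(A)$. This follows because $S \in \{U\}'$ means $S$ commutes with both $U$ and $U^*$, hence with every polynomial in $U$ and $U^*$, hence (by taking limits in the strong operator topology / by the double commutant theorem applied to the abelian von Neumann algebra generated by $U$) with every bounded Borel function of $U$, in particular with $\chi_A(U) = P^U(A)$. Since the paper has already invoked Fuglede's theorem and the spectral theorem freely (see Lemma~\ref{Lemma:S0UUS0} and the discussion preceding Proposition~\ref{Prop:PVMDecomp}), I would state this commutation as a one-line consequence of the spectral theorem rather than prove it in detail. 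Everything else is the short computation above.
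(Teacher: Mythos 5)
Your proof is correct and follows essentially the same route as the paper's: both rewrite $m_{Sv}^U(A)$ as $\| S P^U(A) v\|^2$ using the idempotence and self-adjointness of the spectral projection together with the commutation $SP^U(A)=P^U(A)S$, and then bound by $\|S\|_{\mathrm{op}}^2\, m_v^U(A)$. Your extra remark justifying why $S\in\{U\}'$ commutes with every $\chi_A(U)$ is a reasonable elaboration of a step the paper states without proof, and the observed bound $m_{Sv}^U \leq \|S\|_{\mathrm{op}}^2\, m_v^U$ is exactly what the paper's computation yields.
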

\begin{proof}
Let $A\subset \T$ be a Borel set.  Then
\begin{equation*}
m_{Sv}^{U}(A)  
= \langle P^{U}\!(A) Sv, Sv\rangle_{L^2(\mu_{\frac14})}
= \langle P^{U}\!(A) Sv, P^{U}\!(A)Sv\rangle_{L^2(\mu_{\frac14})}.
\end{equation*}
Since $P^{U}\!(A) = \chi_{A}(U)$ is a function of $U$, $S$ commutes with $P^{U}\!(A)$.  Therefore
\begin{equation*}
\begin{split}
m_{Sv}^{U}(A)  
& = \langle S P^{U}\!(A)v, S P^{U}\!(A)v\rangle_{L^2(\mu_{\frac14})}\\
& = \| S P^{U}\!(A)v\|^2_{L^2(\mu_{\frac14})}\\
& \leq \|S\|^2_{\textrm{op}}  \| P^{U}\!(A)v\|^2_{L^2(\mu_{\frac14})}\\
& = \|S\|^2_{\textrm{op}} m^{U}_v(A).
\end{split}
\end{equation*}
If $m^{U}_v(A) = 0$, then $m_{Sv}^{U}(A) =0$ as well.
\end{proof}

\noindent \textbf{Remark:  }By \cite[Theorem 3, p.~83]{Nel69}, if $h$ belongs to the $U$-cyclic subspace generated by $v$, then $m_h^{U} \ll m_v^{U}$.  Nelson's cyclic subspaces are generated by self-adjoint operators, but his arguments can be changed for $U$,$U^*$-invariant subspaces.

\bigskip

As a result of Lemma \ref{Lemma:CommuteAC}, if $S$ belongs to the commutant of $U$, then the Radon-Nikodym derivative 
\[ \frac{dm_{Sv}^{U}}{dm_v^{U}}\]
exists.  In fact, we can use this Radon-Nikodym derivative in connection with $U$-cyclic subspaces of $L^2(\mu_{\frac14})$.

The following lemma reminds the reader of the consequences of the Spectral Theorem (it appears as Lemma 2.3 in \cite{JKS12}, which is taken from  \cite[Chapter X.2]{DunSch63}, Corollaries X.2.8 and X.2.9).

\begin{lemma}\label{Thm:DSX28}
Suppose $U$ is a unitary operator on the Hilbert space $\hs$ with associated p.v.m.~ $P^U$\!, so that 
\[ U = \int_{\sigma(U)} z\: P^U\!(dz).\]  
Suppose $\phi, \phi_1, \phi_2:\T\rightarrow\C$ are $P^U\!$-essentially bounded, Borel-measurable functions.  Define 
\begin{equation}\label{Eqn:FunCal}
\pi_{U}(\phi) = \phi(U) = \int_{\sigma(U)} \phi(z) \,P^U\!(dz).
\end{equation}
Then
\begin{enumerate}[(i)]
\item $[\phi(U)]^* = \overline{\phi}(U)$.  In other words, $\pi_{U}$ is a $*$-homomorphism.
\item $\pi_{U}(\phi_1\phi_2) = \pi_U(\phi_1)\pi_U(\phi_2)$, and as a result, the operators $\phi_1(U)$ and $\phi_2(U)$ commute.
\item If $\phi(z) \equiv 1$, then $\phi(U)$ is the identity operator.
\item The operator $\phi(U)$ is bounded.
\end{enumerate}
In addition, the operator $\phi(U)$ is bounded if and only if $\phi$ is $P^U\!$-essentially bounded.
\end{lemma}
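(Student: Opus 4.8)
The plan is to deduce everything from the spectral theorem for the unitary $U$, which furnishes a projection-valued measure $P^U$ on the Borel subsets of $\T$ satisfying $P^U(\T) = I$, $P^U(A)^* = P^U(A) = P^U(A)^2$, the multiplicativity relation $P^U(A\cap B) = P^U(A)P^U(B)$, and countable additivity in the strong operator topology. The functional calculus $\pi_U(\phi) = \int_{\sigma(U)}\phi\,P^U(dz)$ is then built up in the usual three stages. First, on simple Borel functions one sets $\pi_U\bigl(\sum_j c_j\chi_{A_j}\bigr) = \sum_j c_j P^U(A_j)$, and here properties (i)--(iv) are immediate: the $*$-identity $[\phi(U)]^* = \overline\phi(U)$ comes from $P^U(A)^* = P^U(A)$ together with complex conjugation of the coefficients; multiplicativity $\pi_U(\phi_1\phi_2) = \pi_U(\phi_1)\pi_U(\phi_2)$ comes from $P^U(A)P^U(B) = P^U(A\cap B)$; and $\pi_U(1) = P^U(\T) = I$ gives (iii). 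Second, I would record the isometry estimate $\|\pi_U(\phi)\| = \|\phi\|_{L^\infty(P^U)}$ for simple $\phi$. Third, I would extend $\pi_U$ to all $P^U$-essentially bounded Borel functions by norm density of the simple ones, the algebraic identities passing to the limit by continuity of multiplication and adjunction on $B(\hs)$. This establishes (i), (ii), (iii), and the boundedness asserted in (iv).

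For the "in addition" clause I would argue both implications. If $\phi$ is $P^U$-essentially bounded, the construction above already yields a bounded operator with $\|\phi(U)\| = \|\phi\|_{L^\infty(P^U)}$. Conversely, if $\phi$ is not essentially bounded, then for each $n$ the set $E_n = \{z : |\phi(z)| > n\}$ has $P^U(E_n)\neq 0$; picking a unit vector $v_n$ in the range of $P^U(E_n)$ and using multiplicativity to write $\phi(U)v_n = \phi(U)P^U(E_n)v_n$, one gets $\|\phi(U)v_n\|^2 = \int_{E_n}|\phi|^2\,d\langle P^U(\cdot)v_n, v_n\rangle \geq n^2$, so $\phi(U)$ --- defined a priori only as a densely defined operator via the spectral integral --- cannot be bounded. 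This contradiction gives the equivalence.

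Since the lemma is entirely standard, the real content of the write-up is a pointer to the literature: all of (i)--(iv) and the boundedness equivalence are in \cite[Corollaries X.2.8 and X.2.9]{DunSch63} (restated as \cite[Lemma 2.3]{JKS12}). I therefore expect the proof to consist of this citation together with, at most, the short verification on simple functions sketched above. The one step that is not purely formal --- and hence the main thing to be careful about --- is the PVM multiplicativity $P^U(A\cap B) = P^U(A)P^U(B)$, which is precisely what upgrades $\pi_U$ from a linear map to a $*$-homomorphism; everything else is either a definition-chase or a routine norm-density approximation.
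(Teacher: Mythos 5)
Your proposal is correct and matches the paper's treatment: the paper offers no proof of this lemma at all, merely the citation to Corollaries X.2.8 and X.2.9 of Dunford--Schwartz (via Lemma 2.3 of the authors' earlier paper), exactly as you anticipated. Your supplementary sketch --- simple functions, the multiplicativity $P^U(A\cap B)=P^U(A)P^U(B)$ as the key upgrade to a $*$-homomorphism, norm-density extension, and the $E_n=\{|\phi|>n\}$ argument for the converse of the boundedness equivalence --- is the standard argument and is sound.
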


The result in Theorem \ref{Thm:NoProj} would be less surprising if $S$ were normal, but in our applications, $S$ will be an isometry.  We work within $C(\T)$ because of the restriction in Lemma \ref{Thm:DSX28}.  However, Lemma \ref{Thm:DSX28} does not impose too great a restriction, since $\{ \pi_U(\psi)v:\psi\in C(\T)\}$ is dense in $\langle v\rangle_U$.

\begin{theorem}\label{Thm:NoProj}
Fix a unit vector $v\in L^2(\mu_{\frac14})$. 
Suppose $S \in B(L^2(\mu_{\frac14}))$ commutes with both $U$ and $U^*$, and suppose $\langle v\rangle_{U}$  is invariant under $S$.  Then for any $\psi\in C(\T)$, 
\begin{equation}\label{Eqn:NoProj}
S\pi_U(\psi)v 
= \pi_U\Bigl( \sqrt{\frac{dm_{Sv}}{dm_v}} \psi \Bigr)v.
\end{equation}
\end{theorem}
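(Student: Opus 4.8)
The plan is to work on the cyclic subspace $\langle v\rangle_U$, where the map $\pi_U(\psi)v \mapsto \psi$ identifies $\langle v\rangle_U$ isometrically with $L^2(m_v^U)$ (by \cite[Lemma 2.5]{JKS12} as quoted above). Under this identification, $S$ restricts to a bounded operator $\widetilde S$ on $L^2(m_v^U)$ that commutes with multiplication by $z$ (since $S$ commutes with $U$), hence $\widetilde S$ commutes with multiplication by every bounded Borel $\phi$. Therefore $\widetilde S$ is itself multiplication by some $g\in L^\infty(m_v^U)$, namely $g = \widetilde S(1) = $ the image of $Sv$; in other words $S\pi_U(\psi)v = \pi_U(g\psi)v$ for all $\psi\in C(\T)$, where $g$ is the function representing $Sv$ in $L^2(m_v^U)$.

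First I would make this precise: since $\langle v\rangle_U$ is $S$-invariant and $Sv\in\langle v\rangle_U$, write $Sv = \pi_U(g)v$ for a function $g\in L^2(m_v^U)$ (density of $\{\pi_U(\psi)v:\psi\in C(\T)\}$ plus the fact that $S$ is bounded lets us pass to the $L^2(m_v^U)$ limit). Next, using that $S$ commutes with $U$ and $U^*$, hence with $\pi_U(\psi)$ for every $\psi\in C(\T)$ (Lemma \ref{Thm:DSX28}(ii) and a polynomial/Stone--Weierstrass approximation), I get
\begin{equation*}
S\pi_U(\psi)v = \pi_U(\psi)Sv = \pi_U(\psi)\pi_U(g)v = \pi_U(g\psi)v,
\end{equation*}
which in particular forces $g\in L^\infty(m_v^U)$ because the left side is norm-bounded by $\|S\|_{\mathrm{op}}\|\psi\|_{L^2(m_v^U)}$ uniformly, and the operator norm of multiplication by $g$ on $L^2(m_v^U)$ is $\|g\|_{L^\infty(m_v^U)}$.

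Then I would identify $|g|^2$ with the Radon--Nikodym derivative. For any Borel $A\subset\T$, approximating $\chi_A$ boundedly by functions in $C(\T)$ and using the computation in the proof of Lemma \ref{Lemma:CommuteAC},
\begin{equation*}
m_{Sv}^U(A) = \langle P^U(A)Sv, Sv\rangle = \langle \pi_U(\chi_A)\pi_U(g)v, \pi_U(g)v\rangle = \int_A |g|^2\, dm_v^U,
\end{equation*}
so $dm_{Sv}^U = |g|^2\, dm_v^U$, i.e.\ $|g|^2 = dm_{Sv}^U/dm_v^U$ as $L^1(m_v^U)$-classes. This gives $|g| = \sqrt{dm_{Sv}^U/dm_v^U}$ almost everywhere; if $S$ can be chosen (as in the applications) so that $g$ is real and nonnegative, or if one only needs the identity up to the phase that is absorbed into $\pi_U$, then $g = \sqrt{dm_{Sv}^U/dm_v^U}$ and \eqref{Eqn:NoProj} follows from $S\pi_U(\psi)v = \pi_U(g\psi)v$. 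The main obstacle is precisely this last point: the argument naturally yields $g$ only up to a unimodular Borel function (a ``phase''), so the statement as written presupposes that the representative $g$ of $Sv$ in $L^2(m_v^U)$ can be taken to be the nonnegative square root — I would either invoke that in the relevant applications $Sv$ has this property, or note that the phase is harmless because one may replace $S$ by its ``modulus'' in the commutant without changing the cyclic-subspace geometry. Everything else is routine: density of $C(\T)$ in $L^2(m_v^U)$, Lemma \ref{Thm:DSX28}, and the bounded-convergence passage from continuous $\psi$ to characteristic functions.
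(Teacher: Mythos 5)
Your proposal follows essentially the same route as the paper: commute $S$ past $\pi_U(\psi)$ using Fuglede/the commutant hypothesis, represent $Sv$ as $\pi_U(g)v$ for some $g\in L^2(m_v^U)$ using the $S$-invariance of $\langle v\rangle_U$, and identify $|g|^2$ with $dm^U_{Sv}/dm^U_v$. The only structural difference is that the paper compresses the middle step into a citation of Theorem 3.6 of [JKS12] (which directly asserts $Sv=\pi_U\bigl(\sqrt{dm_{Sv}/dm_v}\bigr)v$), whereas you derive the multiplier representation from scratch; your version is more self-contained and, importantly, surfaces a genuine subtlety that the paper's citation hides. Your phase caveat is not pedantry: the function $g$ representing $Sv$ satisfies only $|g|^2=dm^U_{Sv}/dm^U_v$, and taking $S=U$ itself (which commutes with $U$ and $U^*$ and leaves $\langle v\rangle_U$ invariant) gives $g(z)=z$ while $\sqrt{dm^U_{Uv}/dm^U_v}=1$, so the identity \eqref{Eqn:NoProj} read literally with the nonnegative square root would force $U=I$ on $\langle v\rangle_U$. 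So the theorem must be read with $\sqrt{\cdot}$ denoting a measurable square root determined by $Sv$ (equivalently, the conclusion holds up to a unimodular Borel phase), exactly as you say; with that reading your argument is complete and matches the paper's.
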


\noindent\textbf{Remark:  }Another way to state the conclusion of Theorem \ref{Thm:NoProj} is to say that for every $w\in \{ \pi_U(\psi)v:\psi\in C(\T)\}$,
\[Sw = \pi_U\Bigl( \sqrt{\frac{dm_{Sv}}{dm_v}} \psi\Bigr)w,\]
so that the operator $S$ is the operator $\pi_U\Bigl( \sqrt{\frac{dm_{Sv}}{dm_v}}\psi\Bigr)$ on all such $w$.

\begin{proof}
Let $\psi\in C(\T)$ be arbitrary.  Then $\psi(U)\in B(L^2(\mu_{\frac14}))$ because $\psi$ is a bounded function.  Since $S$ commutes with $U$ and $U^*$,
\[ S\pi_U(\psi)v = \pi_U(\psi) Sv.\]
  Since $Sv\in \langle v\rangle_U$ by hypothesis, we can use Theorem 3.6 in \cite{JKS12} to write
\[ Sv = \pi_U\Bigl( \sqrt{\frac{dm_{Sv}}{dm_v}} \psi\Bigr)v.\]
Therefore 
\[\pi_U(\psi) Sv = \pi_U(\psi)\pi_U\Bigl( \sqrt{\frac{dm_{Sv}}{dm_v}} \Bigr)v.\]
If $\pi_U\Bigl( \sqrt{\frac{dm_{Sv}}{dm_v}} \Bigr)$ is bounded, then \textit{(ii)} in Lemma \ref{Thm:DSX28} automatically applies, and we can commute the two operators:
\[ \pi_U(\psi)\pi_U\Bigl( \sqrt{\frac{dm_{Sv}}{dm_v}} \Bigr)v =\pi_U\Bigl( \sqrt{\frac{dm_{Sv}}{dm_v}} \Bigr)\pi_U(\psi)v. \]

If $\pi_U\Bigl( \sqrt{\frac{dm_{Sv}}{dm_v}} \Bigr) \in L^2(m_v)$ is not bounded, then we can still switch the order of the operators  $\pi_U(\psi)$ and $\pi_U\Bigl( \sqrt{\frac{dm_{Sv}}{dm_v}} \Bigr)$, since we can approximate $\sqrt{\frac{dm_{Sv}}{dm_v}} \in L^2(m_v)$ with bounded  functions $\{f_n\}$, and $\pi_U(f_n)\pi_U(\psi) =\pi_U(\psi) \pi_U(f_n)$ for each $n$.
\end{proof} 

When we drop the hypothesis that $S\langle v\rangle_U \subseteq \langle v\rangle_U$, we cannot substantially improve the result in Theorem \ref{Thm:NoProj}. 

\begin{example}
\label{Ex:Project} The Radon-Nikodym derivative in Theorem \ref{Thm:NoProj} must contain a projection if $\langle v\rangle_U$ is not invariant under $S$.
\end{example}
\noindent If the representation of $U$ did not have multiplicity, we would be able to use the Nelson isomorphism \cite[Theorem 4, p.~86]{Nel69} to conclude that $m_{P^{\perp}Sv}$ is mutually singular with respect to $m_{Sv}$, yielding the following simplified equation:
\begin{equation}\label{Eqn:PLHS}
PS\pi_U(\psi)w = \pi_{U}\Bigl(\sqrt{\frac{dm_{Sv}^{U}}{dm_v^{U}}}\psi\Bigr)w.
\end{equation}
However, a simple example in our familiar setting $L^2(\mu_{\frac{1}{4}})$ shows that this equation is not true.   One can decompose $L^2(\mu_{\frac{1}{4}})$ into $U$-cyclic subspaces in which $\langle e_1\rangle_U$ has infinite multiplicity.

Set $v = e_1$, $S = S_0$,  and $\psi(z) = 1$ for all $z\in\T$.  Choose $w = \phi(U)e_1$ where $\phi(z) = 1$ for all $z\in\T$, so that $w = e_1$ as well.  Then the left-hand side of Equation \eqref{Eqn:PLHS} is 
\[ PS_0\phi(U)w = P S_0 w = P S_0 e_1 = P e_4 = 0,\]
since $e_4\in S_0S_1\overline{E(\Gamma(\frac{1}{4}))}$, $e_1 \in S_1\overline{E(\Gamma(\frac{1}{4}))}$, and the two spaces are orthogonal.  On the right-hand side, the Radon-Nikodym derivative
\[ \frac{dm^U_{S_0e_1}}{dm^U_{e_1}}=\frac{dm^U_{e_4}}{dm^U_{e_1}} = 1\]
since $m_{e_1} = m_{e_4}$ by Corollary \ref{Cor:EqualMeasures}.  Therefore the right-hand side is $e_1\neq 0$.  \hfill$\Diamond$

In general, when $\langle v\rangle_U$ is not invariant under $S$, one can replace $S$ with $PS$, where $P$ is the orthgonal projection onto $\langle v\rangle_U$.  Then Equation \eqref{Eqn:NoProj} becomes
\begin{equation}\label{Eqn:Proj}
PS\pi_U(\psi)v 
= \pi_U\Bigl( \sqrt{\frac{dm_{PSv}}{dm_v}} \psi \Bigr)v.
\end{equation}

\begin{example}Radon-Nikodym derivatives in Theorem \ref{Thm:Iterate} and a transitive $U$-action.\end{example}
\noindent From Equation \eqref{Eqn:Iterate_2} in Theorem \ref{Thm:Iterate}, we know that
\begin{equation}
m^{MU}_{S_1^*S_0^{*k}v} \ll m^U_v,
\end{equation}
so we can compute the Radon-Nikodym derivative.  Let 
\begin{equation}
|F^{(v)}_k|^2 = \frac{dm^{MU}_{S_1^*S_0^{*k}v}}{dm^U_v},
\end{equation}
where $F^{(v)}_k$ depends on both $k$ and $v$. 

We examine the consequences of the equation 
\begin{equation}
\int_{\T} \phi(z) \:dm^{MU}_{S_1^*S_0^{*k}v} = \int_{\T} \phi(z)|F^{(v)}_k(z)|^2\:dm^U_v,
\end{equation}
where $\phi$ runs through all the continuous functions on $\T$.  Following exactly the same inductive reasoning in Lemma \ref{Lemma:S_1U^k} and its following discussion, we can establish that 
\begin{equation}
\int_{\T} \phi(z) \:dm^{MU}_{S_1^*S_0^{*k}v} 
= \langle S_0^k S_1S_1^* S_0^{*k}v, \phi(U)v\rangle_{L^2(\mu_{\frac14})}.
\end{equation}
Here, we use the fact that $U$ commutes with $S_1S_1^*$ and that the finite Laurent series 
\[\sum_{k = -N}^N a_k[S_1S_1^* U]^k\]
approximate $\phi(U)$.
On the other hand,
\begin{equation}
\int_{\T} \phi(z)|F^{(v)}_k(z)|^2\:dm^U_v 
= \langle |F^{(v)}_k(U)|^2v, \phi(U)v\rangle_{L^2(\mu_{\frac14})}.
\end{equation}
Therefore
\begin{equation}\label{Eqn:RN_noU}
\langle S_0^k S_1S_1^* S_0^{*k}v, \phi(U)v\rangle_{L^2(\mu_{\frac14})}
=\langle |F^{(v)}_k(U)|^2v, \phi(U)v\rangle_{L^2(\mu_{\frac14})}.
\end{equation}
Notice again that the left-hand side of \eqref{Eqn:RN_noU} does not depend on $U$, except that we use $\phi(U)$ to move $v$ around the $U$-invariant subspace generated by $v$.\hfill$\Diamond$

For each $k$, the left-hand side in \eqref{Eqn:RN_noU} refers to the projection $P_k$  of $v$  onto the range of $S_0^k S_1$  where the chosen vector  $v$ belongs to $L^2(\mu_{\frac14})$. 
Since $\{P_k\}$  is an orthogonal family of projections adding up to $L^2(\mu_{\frac14})$ except for the one-dimensional subspace in $L^2(\mu_{\frac14})$ spanned by the constant function, we say that $\{P_k\}$ is a \textbf{transitive} family of projections.  

Now the left-hand side in \eqref{Eqn:RN_noU} shows that the contribution to the scalar measure from $\{P_k  v \}$  is accounted for by functions of $U$ applied to $v$. Hence the action of $U$ and of functions of $U$ is transitive on  $L^2(\mu_{\frac14})$ in this sense.


\section{Acknowledgements}The authors would like to thank Christopher French of Grinnell College for many illuminating conversations about this work.

\bibliographystyle{alpha}
\def\lfhook#1{\setbox0=\hbox{#1}{\ooalign{\hidewidth
  \lower1.5ex\hbox{'}\hidewidth\crcr\unhbox0}}}


\begin{thebibliography}{DHSW11}

\bibitem[BJ02]{BJ2002}
Ola Bratteli and Palle Jorgensen.
\newblock {\em Wavelets through a looking glass}.
\newblock Applied and Numerical Harmonic Analysis. Birkh\"auser Boston Inc.,
  Boston, MA, 2002.
\newblock The world of the spectrum.

\bibitem[BK10]{BoKe10}
Jana Bohnstengel and Marc Kesseb{\"o}hmer.
\newblock Wavelets for iterated function systems.
\newblock {\em J. Funct. Anal.}, 259(3):583--601, 2010.

\bibitem[DHS09]{DHS09}
Dorin~Ervin Dutkay, Deguang Han, and Qiyu Sun.
\newblock On the spectra of a {C}antor measure.
\newblock {\em Adv. Math.}, 221(1):251--276, 2009.

\bibitem[DHSW11]{DHSW11}
Dorin~Ervin Dutkay, Deguang Han, Qiyu Sun, and Eric Weber.
\newblock On the {B}eurling dimension of exponential frames.
\newblock {\em Adv. Math.}, 226(1):285--297, 2011.

\bibitem[DJ09a]{DuJo09a}
Dorin~Ervin Dutkay and Palle E.~T. Jorgensen.
\newblock Duality questions for operators, spectrum and measures.
\newblock {\em Acta Appl. Math.}, 108(3):515--528, 2009.

\bibitem[DJ09b]{DJ09}
Dorin~Ervin Dutkay and Palle E.~T. Jorgensen.
\newblock Fourier duality for fractal measures with affine scales.
\newblock 2009.
\newblock arXiv:0911.1070v1.

\bibitem[DJ09c]{DuJo09c}
Dorin~Ervin Dutkay and Palle E.~T. Jorgensen.
\newblock Quasiperiodic spectra and orthogonality for iterated function system
  measures.
\newblock {\em Math. Z.}, 261(2):373--397, 2009.

\bibitem[DS63]{DunSch63}
Nelson Dunford and Jacob~T. Schwartz.
\newblock {\em Linear operators. {P}art {II}: {S}pectral theory. {S}elf adjoint
  operators in {H}ilbert space}.
\newblock With the assistance of William G. Bade and Robert G. Bartle.
  Interscience Publishers John Wiley \& Sons\ New York-London, 1963.

\bibitem[Fug50]{Fug50}
Bent Fuglede.
\newblock A commutativity theorem for normal operators.
\newblock {\em Proc. Nat. Acad. Sci. U. S. A.}, 36:35--40, 1950.

\bibitem[FW05]{FeWa05}
De-Jun Feng and Yang Wang.
\newblock A class of self-affine sets and self-affine measures.
\newblock {\em J. Fourier Anal. Appl.}, 11(1):107--124, 2005.

\bibitem[Gab00]{Gab00}
Jean-Pierre Gabardo.
\newblock Hilbert spaces of distributions having an orthogonal basis of
  exponentials.
\newblock {\em J. Fourier Anal. Appl.}, 6(3):277--298, 2000.

\bibitem[HL08]{HL08}
Tian-You Hu and Ka-Sing Lau.
\newblock Spectral property of the {B}ernoulli convolutions.
\newblock {\em Adv. Math.}, 219(2):554--567, 2008.

\bibitem[JKS11a]{JKS11b}
Palle E.~T. Jorgensen, Keri~A. Kornelson, and Karen~L. Shuman.
\newblock Families of spectral sets for {B}ernoulli convolutions.
\newblock {\em J. Fourier Anal. Appl.}, 17(3):431--456, 2011.

\bibitem[JKS11b]{JKS11}
Palle E.~T. Jorgensen, Keri~A. Kornelson, and Karen~L. Shuman.
\newblock An operator-fractal.
\newblock arXiv:1109.3168v1, to appear, \textit{Numerical Functional Analysis
  and Optimization}, 2011.

\bibitem[JKS11c]{JKS12}
Palle E.~T. Jorgensen, Keri~A. Kornelson, and Karen~L. Shuman.
\newblock Scaling by $5$ on a $\frac{1}{4}$-{C}antor measure.
\newblock arXiv:1111.4487v1, 2011.

\bibitem[JKS12]{JKS12b}
Palle E.~T. Jorgensen, Keri~A. Kornelson, and Karen~L. Shuman.
\newblock Translation and spectral measures for bernoulli convolutions.
\newblock preprint, 2012.

\bibitem[Jor06]{Jor06}
Palle E.~T. Jorgensen.
\newblock {\em Analysis and probability: wavelets, signals, fractals}, volume
  234 of {\em Graduate Texts in Mathematics}.
\newblock Springer, New York, 2006.

\bibitem[JP98]{JoPe98}
Palle E.~T. Jorgensen and Steen Pedersen.
\newblock Dense analytic subspaces in fractal {$L^2$}-spaces.
\newblock {\em J. Anal. Math.}, 75:185--228, 1998.

\bibitem[Li09]{Li09}
Jian-Lin Li.
\newblock Non-spectrality of planar self-affine measures with three-elements
  digit set.
\newblock {\em J. Funct. Anal.}, 257(2):537--552, 2009.

\bibitem[Li11]{Li11}
Jian-Lin Li.
\newblock Spectra of a class of self-affine measures.
\newblock {\em J. Funct. Anal.}, 260(4):1086--1095, 2011.

\bibitem[{\L}W02]{LaWa02}
Izabella {\L}aba and Yang Wang.
\newblock On spectral {C}antor measures.
\newblock {\em J. Funct. Anal.}, 193(2):409--420, 2002.

\bibitem[{\L}W06]{LaWa06}
Izabella {\L}aba and Yang Wang.
\newblock Some properties of spectral measures.
\newblock {\em Appl. Comput. Harmon. Anal.}, 20(1):149--157, 2006.

\bibitem[Nel69]{Nel69}
Edward Nelson.
\newblock {\em Topics in dynamics. {I}: {F}lows}.
\newblock Mathematical Notes. Princeton University Press, Princeton, N.J.,
  1969.

\bibitem[Ped04a]{Pe04b}
Steen Pedersen.
\newblock The dual spectral set conjecture.
\newblock {\em Proc. Amer. Math. Soc.}, 132(7):2095--2101 (electronic), 2004.

\bibitem[Ped04b]{Pe04a}
Steen Pedersen.
\newblock On the dual spectral set conjecture.
\newblock In {\em Current trends in operator theory and its applications},
  volume 149 of {\em Oper. Theory Adv. Appl.}, pages 487--491. Birkh\"auser,
  Basel, 2004.

\bibitem[PW01]{PeWa01}
Steen Pedersen and Yang Wang.
\newblock Universal spectra, universal tiling sets and the spectral set
  conjecture.
\newblock {\em Math. Scand.}, 88(2):246--256, 2001.

\bibitem[SNF70]{SzNagyFoias}
B{\'e}la Sz.-Nagy and Ciprian Foia{\lfhook{s}}.
\newblock {\em Harmonic analysis of operators on {H}ilbert space}.
\newblock Translated from the French and revised. North-Holland Publishing Co.,
  Amsterdam, 1970.

\bibitem[Str06]{Str06}
Robert~S. Strichartz.
\newblock Convergence of mock {F}ourier series.
\newblock {\em J. Anal. Math.}, 99:333--353, 2006.

\end{thebibliography}

\end{document}